 \newcommand{\sll}{\mathfrak{so}(4,\mathbb{C}) }
 \newcommand{\s}{\mathfrak{sl}(2,\mathbb{C})}
 \newcommand{\ssl}{\mathfrak{sl}(2,\mathbb{C})}
 \newcommand{\la}{\mathfrak{g}}
\newcommand{\e}{\mathfrak{e}(2)}
\newcommand{\soo}{\mathfrak{sp}(4,\mathbb{C})}
  \theoremstyle{definition}
  \newtheorem{definition}{Definition}[section]
  \theoremstyle{plain}
  \newtheorem{lemma}[definition]{Lemma}
  \newtheorem{theorem}[definition]{Theorem}
\title[]{The  subalgebras of $\sll$}
\begin{document}

\author[]{Andrew Douglas}

\author[]{Joe Repka}

\address[]{CUNY Graduate Center and New York City College of Technology, City University of New York, USA}
\email{adouglas2@gc.cuny.edu}
\address[]{Department of Mathematics, University of Toronto, Canada}
\email{repka@math.toronto.edu}

\date{\today}

\keywords{Solvable subalgebras, Levi decomposable subalgebras, semisimple subalgebras, special orthogonal algebras} 
\subjclass[2010]{17B05, 17B10, 17B20, 17B30}

\begin{abstract}
We classify  the solvable subalgebras, semisimple subalgebras, and Levi
decomposable subalgebras of $\sll$, up to inner automorphism.  By Levi's Theorem, this is a full classification of the subalgebras of $\sll$. 
\end{abstract}

\maketitle

\section{Introduction}

Semisimple subalgebras of semisimple Lie algebras have been extensively studied.  Dynkin \cite{dynkin} and Minchenko \cite{min}, for instance, classified the semisimple subalgebras of the exceptional Lie algebras, up to inner automorphism.  In \cite{degraafb}, de Graaf classified the semisimple subalgebras of the simple Lie algebras of ranks $\le 8$, up to linear equivalence, which is somewhat weaker than a classification up to inner automorphism.

Much less research has examined general subalgebras of semisimple Lie
algebras.  By Levi's Theorem [\cite{levi}, Chapter III, Section 9] any
finite-dimensional Lie algebra over a field of characteristic $0$ is a
semi-direct sum of its maximal solvable ideal and a semisimple Lie
algebra.   A Lie algebra with a nontrivial decomposition into  a semisimple Lie
algebra with a solvable Lie algebra is referred to as a  Levi
decomposable  algebra.

We have made considerable progress in classifying both solvable and Levi decomposable subalgebras of semisimple Lie algebras (e.g., \cite{abcd, dr1, dkr, ddr}).   For instance, in \cite{dkr}, we classified the abelian extensions of $\mathfrak{so}(2n,\mathbb{C})$ in the exceptional Lie algebras $E_{n+1}$, up to inner automorphism.   We classified subalgebras isomorphic to the complexification of the  (solvable) Euclidean Lie algebra $\mathfrak{e}(2)$ in the rank $2$ classical Lie algebras in \cite{dr1}; and the subalgebras isomorphic to  the (Levi decomposable) Poincar\'e algebra 
in rank $3$ simple Lie algebras in \cite{ddr}.

More recently, in \cite{abcd} we classified    
the subalgebras isomorphic to  $\mathfrak{sl}(n,\mathbb{C})\inplus \mathbb{C}^{n+1}$, $\mathfrak{so}(2n+1,\mathbb{C})\inplus  \mathbb{C}^{2n+1}$, $\mathfrak{sp}(2n,\mathbb{C})\inplus  \mathbb{C}^{2n}$, $\mathfrak{so}(2n,\mathbb{C})\inplus  \mathbb{C}^{2n}$  in the simple Lie algebras $\mathfrak{sl}(n+1,\mathbb{C})$, $\mathfrak{so}(2n+3,\mathbb{C})$, $\mathfrak{sp}(2n+2,\mathbb{C})$, $\mathfrak{so}(2n+2,\mathbb{C})$, respectively, up to inner automorphism.

In the present article we extend the work on subalgebras of semisimple Lie algebras by classifying  the subalgebras of $\sll$, up to inner automorphism.   The classification of the semisimple
subalgebras of $\sll$ is straightforward.  We classify its solvable
and Levi decomposable subalgebras to complete the classification of 
the subalgebras of $\sll$.

The article is organized as follows.  In Section \ref{begin} we describe the semisimple Lie algebra $\sll$.  In Section \ref{add} we give 
additional notation and terminology.   In Section \ref{solvable} we present the classification of  solvable Lie algebras of  small dimension from \cite{degraafb}.   
We then classify the solvable subalgebras of $\sll$ in  
Section 
\ref{solvableb}.  Finally, in sections \ref{semi} and  \ref{les}, we describe the classification of the semisimple subalgebras of $\sll$ and classify its Levi decomposable subalgebras.

\section{The semisimple Lie algebra $\sll$ and its representations}\label{begin}

The special linear algebra $\s$ is the simple Lie algebra of traceless $2 \times 2$ matrices with complex entries.  
For each $m \in \mathbb{Z} _{\geq 0}$,  there is an irreducible representation $V(m)$ of $\s$ with $\dim (V(m)) = m$.  
Moreover,  these are all the irreducible representations of $\s$.  
The  semisimple Lie algebra $\sll$ is isomorphic to $\s \oplus \s$.  It  is of type $A_1 \times A_1$, and has a Chevalley basis $\{ x_i, y_i ,  h_j:  1\leq i \leq 2, 1 \leq j \leq 2 \}$  defined as follows:
\begin{equation}
\begin{array}{l}
\displaystyle  ah_1+ bh_2+  cx_1+ dx_2+  c'y_1+ d'y_2=  \left(
\begin{array}{rrrr}
 a & c & 0  &0 \\
 c' & -a & 0&  0 \\
 0& 0 &b  &d\\
 0&0&d'&-b
\end{array}
\right).
\end{array} 
\end{equation}

We end this section with a description of the  representations of $\sll$ $\cong$ $\ssl$ $\oplus$ $\ssl$.  Its representations are constructed from those of $\s$.  If $V_1$ and $V_2$ are $\s$-modules, then $V_1 \otimes V_2$ is an $\soo$-module with action
\begin{equation}
(L_1,L_2) \cdot (v_1 \otimes v_2)= (L_1 \cdot v_1) \otimes v_2  + v_1 \otimes (L_2 \cdot v_2).
\end{equation}
We have the following  well-known theorem  
 classifying the finite-dimensional, irreducible representations of $\ssl$ $\oplus$ $\ssl$:
The finite-dimensional $\ssl$ $\oplus$ $\ssl$ representation $V$ is irreducible  if and only if $V \cong V(n) \otimes V(m)$ for some $n$ 
and $m$ $\in$ $\mathbb{Z} _{\geq 0}$, uniquely determined by $V$.

\section{Additional notation and terminology}\label{add}

\begin{itemize} 
\item Two Lie algebra embeddings $\varphi$ and $\varphi'$ into $\la$ are equivalent (up to inner automorphism) if there is an inner automorphism $\psi$ of $\la$ such that $\psi \circ \varphi =\varphi'$.  In this case we write
$\varphi \sim \varphi'.$
\item  Two subalgebras $S$ and $S'$ of $\la$ are equivalent  if  there is an inner automorphism $\psi$ of $\la$ such that $\psi(S)=S'$.  In this case we write
$S \sim S'.$
\end{itemize}

\section{Classification of solvable Lie algebras of small dimension}\label{solvable}

A full classification of solvable Lie algebras is not known and thought to be an impossible task.  However, classifications of solvable Lie algebras in special cases have been considered.  For instance, de Graaf classified the solvable Lie algebras in dimension $\leq 4$ over fields of any characteristic \cite{degraafa}.  
Writing $\{ z_{1}, \dots , z_{m} \}$ for a basis of an algebra of dimension $m$,  we
 describe 
the
classification, which we consider over $\mathbb{C}$, below:
\begin{equation}
\begin{array}{llll}
J & \text{The abelian Lie algebra of dimension $1$}\\
\end{array}
\end{equation}
\begin{equation}
\begin{array}{llll}
K_1 & \text{The abelian Lie algebra of dimension $2$}\\
K_2 & [z_1,z_2]=z_1 \\
\end{array}
\end{equation}
\begin{equation}
\begin{array}{llll}
L_1 & \text{The abelian Lie algebra of dimension $3$}\\
L_2 & [z_3,z_1]=z_1, [z_3,z_2]=z_2 \\
L_{3,a} & [z_3,z_1]=z_2, [z_3,z_2]=az_1+z_2 \\
L_{4} & [z_3,z_1]=z_2, [z_3,z_2]=z_1 \\
L_{5} & [z_3,z_1]=z_2
\end{array}
\end{equation}
Note that we get a nonisomorphic solvable Lie algebra $L_{3,a}$ for each $a\in \mathbb{C}$.

There are numerous families of solvable Lie algebras of dimension $4$, some of which are infinite \cite{degraafa}.  We describe only one $4$-dimensional solvable algebra from this classification, which de Graaf denotes $M_8$, since it is the only one of relevance to the present article:
\begin{equation}
\begin{array}{llll}
M_8 & [z_1,z_2]=z_2, [z_3,z_4]=z_4.
\end{array}
\end{equation}

\section{The solvable subalgebras of $\sll$}\label{solvableb}

We proceed by dimension to classify the solvable subalgebras of $\sll$, up to inner automorphism.  The results 
are summarized in Table \ref{enuuu}.

\begin{theorem}
The classification of $1$-dimensional subalgebras in $\sll$, up to inner automorphism, is given below  
\begin{equation}\label{try2}
\begin{array}{llllllllll}
J^1&=& \langle x_1\rangle, &
J^2&=& \langle x_2\rangle,\\ 
J^3
&=& 
\langle h_1\rangle, 
&
J^{4}&=& \langle h_2\rangle, \\
J^{5}&=& \langle x_1+x_2\rangle, \\
J^{6}&=& \langle x_1+h_2\rangle, &
J^{7}&=& \langle h_1+x_2\rangle, \\
 J^{8, a}&=& \langle h_1+a h_2\rangle,  
 \quad for \,\, a \in \mathbb{C^{*}},
\end{array}
\end{equation}
where $J^{8, a} \sim J^{8, b}$ if and only if $a= \pm b$.
\end{theorem}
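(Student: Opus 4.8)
The plan is to classify one-dimensional subspaces of $\sll \cong \s \oplus \s$ (equivalently, nonzero elements up to scalar) under the componentwise adjoint action of the inner automorphism group, which here is $\mathrm{Ad}(SL(2,\mathbb{C}) \times SL(2,\mathbb{C}))$. Since every one-dimensional subspace is abelian, each line is automatically a subalgebra, so the problem reduces to understanding orbits of lines. The crucial structural point I would emphasize is that an inner automorphism acts independently on the two $\s$-summands and, in particular, does not interchange them; this non-symmetry is exactly what keeps $J^1$ distinct from $J^2$, $J^3$ from $J^4$, and $J^6$ from $J^7$, and what forbids identifying $J^{8,a}$ with $J^{8,1/a}$.

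First I would record the orbit structure of a single copy of $\s$. Writing a traceless $2\times 2$ matrix as $X$, its characteristic polynomial is $t^2 + \det X$, so a nonzero $X$ is nilpotent exactly when $\det X = 0$ and is semisimple (with distinct eigenvalues) exactly when $\det X \neq 0$; conjugation preserves this dichotomy. Every nonzero nilpotent is $SL(2,\mathbb{C})$-conjugate to a nonzero multiple of $x$, and conjugation by a diagonal torus element rescales that multiple over all of $\mathbb{C}^*$, so the line spanned by any nonzero nilpotent is conjugate to $\langle x\rangle$. Every semisimple element is conjugate to $\lambda h$ with $\lambda \in \mathbb{C}^*$, and this $\lambda$ is determined only up to the sign flip $\lambda \mapsto -\lambda$ coming from the Weyl element; hence every semisimple line is conjugate to $\langle h\rangle$. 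The contrast here — nilpotents can be rescaled within a factor, while semisimples admit only a sign flip — is the mechanism that will later produce discrete normal forms versus a one-parameter family.

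Next I would take a general nonzero $(X_1, X_2) \in \s \oplus \s$ and run a case analysis on the ordered pair of types of $X_1$ and $X_2$, each being zero, nilpotent, or semisimple. Because the type of each component is a per-factor conjugation invariant, the eight resulting type-pairs are mutually inequivalent and correspond precisely to $J^1,\dots,J^8$. Within each case I would normalize: a nilpotent component is conjugated to $x_i$ and its scalar absorbed by the torus, while a semisimple component is conjugated to $\lambda_i h_i$. When both components are nonzero, one overall scaling of the line is spent to fix a single coefficient, and the interplay of this scaling with the per-factor nilpotent rescaling and the Weyl sign flips collapses the nilpotent-nilpotent and mixed cases to the single orbits $J^5$, $J^6$, $J^7$. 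The genuinely one-parameter case is $(\lambda_1 h_1, \lambda_2 h_2)$ with both $\lambda_i \neq 0$: scaling normalizes it to $\langle h_1 + a h_2\rangle$ with $a = \lambda_2/\lambda_1 \in \mathbb{C}^*$, and since no conjugation can rescale a semisimple component, $a$ survives.

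Finally I would settle the equivalence relation on $J^{8,a}$. The only residual freedoms are the two Weyl sign flips $h_1 \mapsto -h_1$ and $h_2 \mapsto -h_2$ together with overall rescaling of the line; each single flip (after rescaling to restore the leading $h_1$-coefficient to $1$) sends $a \mapsto -a$, while both flips together return $a$, so the orbit of $a$ is exactly $\{a,-a\}$. This yields $J^{8,a} \sim J^{8,b}$ if and only if $a = \pm b$, with no passage to $1/a$ because that would require the outer swap of the two factors. The main obstacle is precisely the bookkeeping in this last step and in the mixed cases: keeping straight which of the three freedoms (overall scaling, per-factor nilpotent rescaling, Weyl sign flip) remains available after the others have been used, so that one neither over-identifies the $J^{8,a}$ into a single class nor misses the $a \sim -a$ relation.
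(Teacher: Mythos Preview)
Your proof is correct and follows essentially the same approach as the paper: both reduce each component to its $SL(2,\mathbb{C})$-normal form (nilpotent versus semisimple), combine via the product structure while noting that the factor-swap is outer, and handle the $J^{8,a}$ equivalence by the eigenvalue/Weyl-flip argument. Your exposition makes the type-pair invariant and the bookkeeping of the three available freedoms (overall scaling, torus rescaling of nilpotents, Weyl sign flips) somewhat more explicit than the paper does, but the underlying argument is the same.
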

\begin{proof}
A nonzero element of $\langle x_1, h_1, y_1\rangle$ (resp. $\langle x_2, h_2, y_2\rangle$) is conjugate  
by an element of 
$GL(2, \mathbb{C})$, and hence $SL(2,\mathbb{C})$, to precisely one of $x_1$
or 
$ 
\pm
\alpha h_1$ (resp.  $x_2$
or $ 
\pm
\alpha h_2$), for 
$\alpha \in \mathbb{C}^{*}$.
Hence, a $1$-dimensional subalgebra of $\sll$ is conjugate to one of the following:  
\begin{equation}\label{try3}
\begin{array}{llllll}
\langle x_1\rangle&=&J^1,\\
\langle x_2\rangle&=&J^2,\\
\langle \alpha h_1\rangle =  \langle h_1\rangle
&=&
J^3,
\\
\langle \alpha h_2\rangle =  \langle h_2\rangle&=&J^4,\\
 \langle x_1+x_2\rangle&=&J^5,\\   
  \langle x_1+\alpha h_2\rangle \sim \langle x_1+h_2\rangle&=&J^6,\\
    \langle \alpha h_1+ x_2\rangle \sim  \langle h_1+x_2\rangle&=&J^7,\\
     \langle \alpha h_1+\beta h_2 \rangle =  \langle h_1+a h_2\rangle&=&J^{8,a},
\end{array}
\end{equation}
for $\alpha, \beta
\in \mathbb{C}^*$, 
$a=\frac{\beta}{\alpha}$.  Hence, Eq. \eqref{try2} is a complete list of $1$-dimensional subalgebras of $\sll$. 

Considering that the automorphism that ``switches" the $\mathfrak{sl}(2, \mathbb{C})$ components of $\sll$ is outer, the subalgebras  $J^1$, $J^2$, $J^3$, $J^4$, $J^5$, $J^6$, 
 $J^{7}$, $J^{8,a}$  
in Eq. \eqref{try3}  are pairwise inequivalent  for any fixed $a$.    It just remains to show 
$J^{8, a} \sim J^{8, b}$ 
if and only if $a=\pm b$.  

Suppose 
$J^{8,a}\sim J^{8, b}$;
then $h_1+a h_2$ is conjugate 
by an element of
$SL(2, \mathbb{C})\times SL(2, \mathbb{C})$ to 
$\lambda(h_1+b h_2)$ for some $\lambda\in \mathbb{C}$.  This implies that $h_1$ is conjugate to $\lambda h_1$, so that $\lambda=
\pm 1
$.
Thus $h_1+a h_2$ is conjugate to $h_1+b h_2$ 
by an element of
$SL(2, \mathbb{C})\times SL(2, \mathbb{C})$, which implies $a h_2$ is conjugate to $b h_2$ 
by an element of
$SL(2, \mathbb{C})$.  Thus $a= \pm b$.  Conversely, it is clear that if $a=\pm b$, then 
$J^{8, a} \sim J^{8, b}$.
\end{proof}

\begin{theorem}
There are precisely four subalgebras isomorphic to $K_1$  in $\sll$, up to inner automorphism.  They are:
\begin{equation}
\begin{array}{llllll}
K^1_1= \langle x_1, x_2 \rangle, &K^2_1= \langle x_1, h_2 \rangle,&
K^3_1= \langle h_1, x_2 \rangle,& 
K^{4}_1= \langle h_1,  h_2 \rangle.
\end{array}
\end{equation}
\end{theorem}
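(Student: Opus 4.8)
The plan is to build directly on the preceding classification of $1$-dimensional subalgebras, exploiting the product structure $\sll \cong \s \oplus \s$ together with the componentwise bracket $[(A_1,B_1),(A_2,B_2)] = ([A_1,A_2],[B_1,B_2])$. Let $W$ be an arbitrary $2$-dimensional abelian subalgebra, and let $\pi_1,\pi_2$ denote the projections onto the two copies of $\s$. Since each image $\pi_i(W)$ is an abelian subalgebra of $\s$, and $\s$ has no $2$-dimensional abelian subalgebra (its maximal abelian subalgebras are the $1$-dimensional Cartan subalgebras), I would first record that $\dim \pi_i(W) \le 1$ for $i=1,2$.

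The key structural step is to show that $W$ is a product. From the inclusion $W \subseteq \pi_1(W) \oplus \pi_2(W)$ together with the dimension count $\dim(\pi_1(W)\oplus\pi_2(W)) = \dim\pi_1(W)+\dim\pi_2(W) \le 2 = \dim W$, I would conclude that both projections are exactly $1$-dimensional and that $W = \pi_1(W) \oplus \pi_2(W)$. Writing $\pi_1(W)=\langle a\rangle$ and $\pi_2(W)=\langle b\rangle$ with $a,b$ nonzero, this yields $W = \langle (a,0),(0,b)\rangle$; in particular the two generators cannot be genuinely entangled across the factors.

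Next I would reduce each factor separately. Conjugation by $SL(2,\mathbb{C})\times SL(2,\mathbb{C})$ acts independently on the two summands and is inner in $\sll$, so by the argument used in the proof of the $1$-dimensional classification, $\langle a\rangle$ is conjugate in the first factor to either $\langle x_1\rangle$ or $\langle h_1\rangle$, and $\langle b\rangle$ in the second factor to either $\langle x_2\rangle$ or $\langle h_2\rangle$. Carrying out both conjugations simultaneously places $W$ in one of the four listed forms $K_1^1,\dots,K_1^4$, which establishes completeness of the list.

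Finally I would verify pairwise inequivalence. Because the automorphism interchanging the two copies of $\s$ is outer, every inner automorphism of $\sll$ preserves each factor; hence the type (nilpotent versus semisimple) of the generator in each factor is an invariant. The four subalgebras realize the four elements of $\{\text{nilpotent},\text{semisimple}\}^2$, so no two of them are equivalent. I expect the only delicate point to be the product-structure step, namely ruling out $2$-dimensional abelian subalgebras whose generators mix the two factors; once the dimension count forces $W = \pi_1(W)\oplus\pi_2(W)$, the remainder of the argument follows routinely from the rank-one case.
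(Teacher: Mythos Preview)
Your proof is correct and follows essentially the same approach as the paper: both arguments use that $\s$ has no $2$-dimensional abelian subalgebra to force the $K_1$-subalgebra to split as a product of one-dimensional pieces across the two $\s$ factors, then reduce each factor separately via $SL(2,\mathbb{C})$-conjugation. Your use of projections and dimension counting is a slightly cleaner packaging of the paper's elementwise linear-dependence argument, and your explicit nilpotent/semisimple invariant makes the inequivalence step more transparent than the paper's ``clearly pairwise inequivalent,'' but the substance is the same.
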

\begin{proof}
Let $\langle u, v\rangle$ be a $2$-dimensional, abelian subalgebra of $\sll$,  that is, a subalgebra isomorphic to $K_1$.  

Let $u =u_1+v_1$, and $v=v_1+v_2$, where $u_1, v_1 \in \langle x_1, h_1, y_1\rangle\cong \s$, and $u_2, v_2 \in \langle x_2, h_2, y_2\rangle\cong \s$.
Since $[u, v]=0$, then $[u_1, v_1]=[u_2, v_2]=0$.  Hence, $u_1$ and $v_1$ are linearly dependent, as are $u_2$ and $v_2$,
 since $\s$ does not contain an abelian subalgebra of dimension greater than $1$.  

 This implies that $\langle u, v \rangle$ must have a basis with one element in one $\s$ component of $\sll$, and the other element in the other $\s$ 
 component of $\sll$.  Without loss of generality, we may assume $u \in \langle x_1, h_1, y_1 \rangle$, and $v \in \langle x_2, h_2, y_2\rangle$.

Hence,  
as elements of $\s$,  
$u$ is conjugate by an element of $SL(2, \mathbb{C})$  
to (a nonzero scalar multiple of) exactly one of
$x_1$
or $h_1$,
and $v$ is conjugate 
to (a nonzero scalar multiple of) exactly one of $x_2$
 or $h_2$.  Hence, $\langle u, v\rangle$ is conjugate 
 by an element of 
 $SL(2,\mathbb{C})\times SL(2,\mathbb{C})$ to  one of $K^i_1$, $1\leq i \leq 4$. 
 Further, 
 these subalgebras are clearly pairwise inequivalent. 
\end{proof}

\begin{theorem}\label{ktwo}
Up to inner automorphism, 
there are five families of  subalgebras  
of $\sll$ that are 
isomorphic to $K_2$; two of them 
are infinite: 
\begin{equation}\label{ktt}
\begin{array}{llllllll}
K^{1}_2 &=& \langle x_1+x_2, h_1+h_2 
\rangle, &K^{2,a}_2&=& \langle x_1,  h_1 + a h_2  \rangle,\\
K^{3}_2&=& \langle x_1,  h_1 + x_2 \rangle, &K^{4,a}_2&=& \langle x_2,  h_2 + a h_1  \rangle,\\
K^{5}_2&=& \langle x_2,  h_2 + x_1  \rangle,
\end{array}
\end{equation}
for $a \in \mathbb{C}$, and such that 
$K^{i,a}_2 \sim K^{i,b}_2$ if and only if $a =  \pm b$ for $i=2, 4$.
\end{theorem}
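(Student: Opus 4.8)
The plan is to exploit that a subalgebra $S \cong K_2$ is $2$-dimensional with a $1$-dimensional derived algebra $[S,S]$; choosing $u$ to span $[S,S]$ and $v$ so that $[v,u]=u$, the classification reduces to understanding such pairs $(u,v)$ up to conjugacy. Writing $u=u_1+u_2$ and $v=v_1+v_2$ with $u_i,v_i$ in the two $\s$ summands of $\sll$, the relation $[v,u]=u$ splits as $[v_1,u_1]=u_1$ and $[v_2,u_2]=u_2$. The structural input I would establish first is that \emph{if $u_i\neq 0$ then $\ad_{v_i}$ has $u_i$ as an eigenvector of eigenvalue $1$}; since the eigenvalues of $\ad_{w}$ on $\s$ are $\{2\theta,0,-2\theta\}$ for $\pm\theta$ the eigenvalues of $w$, this forces $v_i$ to be regular semisimple and $u_i$ a nonzero root vector, hence nilpotent. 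Thus in any component where $u_i\neq0$ we may conjugate so that $u_i=x_i$ and $v_i=\tfrac12 h_i$ modulo $\langle x_i\rangle$.

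Next I would split into cases according to which of $u_1,u_2$ vanish; this trichotomy is conjugation-invariant because $SL(2,\mathbb{C})\times SL(2,\mathbb{C})$ acts componentwise and so preserves each projection $\pi_i([S,S])$. If both $u_i\neq0$, normalize $u=x_1+x_2$; the residual stabilizer of $u$ contains the unipotent subgroups of each factor, and conjugating by these removes the leftover $x_i$-terms of $v$, leaving $v=\tfrac12(h_1+h_2)$, i.e. $S=K^1_2$. If $u_2=0$ (the case $u_1=0$ is symmetric), then $u=x_1$ and $v_1=\tfrac12 h_1$, while $v_2$ is entirely unconstrained; setting $p=2v_2$ gives $S=\langle x_1,\;h_1+p\rangle$, and this subalgebra depends only on the $SL(2,\mathbb{C})$-conjugacy class of $p$ in the second factor. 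The three conjugacy types of $p$ --- zero, nonzero nilpotent, and semisimple $a h_2$ --- yield $K^{2,0}_2$, $K^3_2$, and $K^{2,a}_2$ ($a\neq0$) respectively; the symmetric case produces $K^{4,a}_2$ and $K^5_2$.

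It remains to sort out equivalences. Across the cases, the pair (which projections $\pi_i([S,S])$ are nonzero; whether a nonzero projection is nilpotent or semisimple) is a conjugation invariant that separates $K^1_2$, the $\langle x_1\rangle$-cases, and the $\langle x_2\rangle$-cases from one another, and within a factor separates $K^3_2$ (nilpotent) from $K^{2,a}_2$ (semisimple or zero); since the summand-swapping automorphism is outer, the $u_2=0$ and $u_1=0$ families genuinely remain distinct. For the modulus, suppose $K^{2,a}_2\sim K^{2,b}_2$ via $(g_1,g_2)$. Then $g_1$ preserves $\langle x_1\rangle$, so $g_1$ lies in the Borel and sends $h_1$ to $h_1$ plus a multiple of $x_1$; comparing second components of the image of $h_1+a h_2$ forces $a\,(g_2 h_2 g_2^{-1})=b h_2$, and as conjugate semisimple elements share the eigenvalue multiset $\{1,-1\}$, this yields $b=\pm a$, with the converse immediate. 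I expect the main obstacle to be the careful bookkeeping of residual freedom after normalizing $u$ --- in particular verifying in the two-sided case that the stabilizer of $u$ really does clear the remaining nilpotent parts of $v$ (so that $K^1_2$ is a single class), and isolating the genuine sign-ambiguous modulus $a$ in the one-sided cases, which is the crux $b=\pm a$ computation above.
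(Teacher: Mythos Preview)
Your argument is correct and takes a genuinely different route from the paper's proof. The paper first moves an arbitrary $K_2$ into the fixed Borel $\langle x_1,x_2,h_1,h_2\rangle$, writes $\varphi(z_1)=sx_1+tx_2$ and $\varphi(z_2)=ch_1+dh_2+ex_1+fx_2$, and then does a case split on $(s,t)$ with explicit $SL(2,\mathbb{C})\times SL(2,\mathbb{C})$ conjugating matrices to reach each normal form; inequivalences are then checked pair by pair via ad hoc eigenvalue arguments. Your approach is more intrinsic: you use that $[S,S]$ is the canonical $1$-dimensional ideal, split $u$ and $v$ componentwise, and invoke the spectrum of $\ad_{v_i}$ on $\s$ to force each $v_i$ (when $u_i\neq0$) to be regular semisimple with $u_i$ a root vector. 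The conjugacy invariants you isolate --- which projections $\pi_i([S,S])$ vanish, and the $SL(2,\mathbb{C})$-conjugacy type of $\pi_2(S)$ (resp.\ $\pi_1(S)$) in the one-sided cases --- do all the work the paper's explicit matrix computations do, but more transparently. One phrasing to tighten: the invariant distinguishing $K_2^3$ from $K_2^{2,a}$ is the conjugacy type of the projection of $S$ (equivalently of $p=2v_2$) to the second factor, not of $[S,S]$; your later sentence makes the intended meaning clear. Also note that in the modulus computation you implicitly use that the Borel fixes $h_1$ modulo $\langle x_1\rangle$, which forces $\mu=1$ before comparing second components; the paper instead only gets $\mu=\pm1$ from eigenvalues, so your argument is actually slightly sharper there.
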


\begin{proof}
 Let $\varphi: K_2 \hookrightarrow \sll$ be an embedding.
Up to inner automorphism, we may assume $\varphi(K_2) \subseteq \langle  x_1, x_2, h_1, h_2 \rangle$, since a Borel 
subalgebra is unique up to inner automorphism, and $\langle  x_1, x_2, h_1, h_2 \rangle$ is a Borel subalgebra of $\sll$.  Considering the commutation relations of 
$K_2$:
\begin{equation}
\begin{array}{llllll}
\varphi(z_1) &=& 
s
 x_1+
t
x_2,\\
\varphi(z_2) &=& c h_1+d h_2 +e x_1+fx_2,
\end{array}
\end{equation}
for $
s,t, c, d, e, f \in \mathbb{C}$, and 
$
s,t
$ not 
both
zero.    We now consider cases based on the values of $
s
$ and $
t
$.

\noindent \underline{Case 1}.  $
s
 \neq 0$, $
t
\neq 0$. Then, the commutation relations of $K_2$ imply that $c=d=-\frac{1}{2}$.  After conjugation by 
\begin{equation}
\begin{array}{l}
 \left(
\begin{array}{rrrr}
\alpha & - \alpha e & 0  &0 \\
 0 &  \alpha 
s
 & 0&  0 \\
 0& 0 &\beta  &-\beta  f\\
 0&0&0& \beta 
t
\end{array}
\right) \in SL(2,\mathbb{C}) \times SL(2,\mathbb{C}), 
\end{array} 
\end{equation}
where $\alpha$ is such that $\alpha^2 
s
=1$ and $\beta$ is such that $\beta^2 
t
=1$, 
we may assume
\begin{equation}
\begin{array}{llllll}
\varphi(z_1) &=& x_1+x_2,\\
\varphi(z_2) &=& -\frac{1}{2}( h_1+ h_2).
\end{array}
\end{equation}
Hence,  $\varphi(K_2)$ is equivalent to the subalgebra
\begin{equation}
\begin{array}{llllll}
K^1_2=\langle x_1+x_2,h_1+h_2\rangle.
\end{array}
\end{equation}

\noindent \underline{Case 2}. $
s
 \neq 0$ and $
t
=0$.  The commutation relations imply $c=-\frac{1}{2}$.  If $d=f=0$, then after conjugation by
\begin{equation}
\begin{array}{l}
 \left(
\begin{array}{rrrr}
 \alpha & -\alpha e & 0  &0 \\
 0 & \alpha 
s
 & 0&  0 \\
 0& 0 &1  &0\\
 0&0&0&1
\end{array}
\right) \in SL(2,\mathbb{C}) \times SL(2,\mathbb{C}), 
\end{array} 
\end{equation}
where  $\alpha$ is such that $\alpha^2 
s
=1$, we may assume
\begin{equation}
\begin{array}{llllll}
\varphi(z_1) &=& x_1,\\
\varphi(z_2) &=& -\frac{1}{2} h_1.
\end{array}
\end{equation}
Hence,  $\varphi(K_2)$ is equivalent to the subalgebra 
\begin{equation}
\begin{array}{llllll}
K^{2,0}_{2}=\langle x_1,h_1\rangle.
\end{array}
\end{equation}

If $d=0$, $f\neq 0$, then, after conjugation by
\begin{equation}
\begin{array}{l}
 \left(
\begin{array}{rrrr}
 \alpha & -\alpha e-\frac{1}{2}e \alpha 
s
 & 0  &0 \\
 0 & \alpha 
s
& 0&  0 \\
 0& 0 &\beta  &0\\
 0&0&0&-2\beta f
\end{array}
\right) \in SL(2,\mathbb{C}) \times SL(2,\mathbb{C}), 
\end{array} 
\end{equation}
where $\alpha$ is such that $\alpha^2
s
=1$ and $\beta$ is such that $-2\beta^2f=1$, we may assume
\begin{equation}
\begin{array}{llllll}
\varphi(z_1) &=& x_1,\\
\varphi(z_2) &=& -\frac{1}{2} (h_1
+x_2).
\end{array}
\end{equation}
Hence,  $\varphi(K_2)$ is equivalent to the subalgebra
\begin{equation}
\begin{array}{llllll}
K^{3}_{2}=\langle x_1,h_1+x_2\rangle.
\end{array}
\end{equation}

If $d\neq 0$, $f\neq 0$, then after conjugation by 
\begin{equation}
\begin{array}{l}
 \left(
\begin{array}{rrrr}
 \alpha & -\alpha e+e \alpha 
s
 & 0  &0 \\
 0 & \alpha 
s
 & 0&  0 \\
 0& 0 &1  &\frac{f}{2d}\\
 0&0&0&1
\end{array}
\right) \in SL(2,\mathbb{C}) \times SL(2,\mathbb{C}), 
\end{array} 
\end{equation}
where  $\alpha$ is such that $\alpha^2 
s
=1$, we may assume
\begin{equation}
\begin{array}{llllll}
\varphi(z_1) &=& x_1,\\
\varphi(z_2) &=& -\frac{1}{2} h_1+dh_2. 
\end{array}
\end{equation}
Hence (letting $a= -2d$), $\varphi(K_2)$ is equivalent to the subalgebra 
\begin{equation}
\begin{array}{llllll}
K^{2,a}_{2}=\langle x_1,h_1+ a h_2\rangle.
\end{array}
\end{equation}

\noindent \underline{Case 3}. $
s
=0$, $
t
\neq 0$. This case follows as in Case 1 to yield $K^{4,a}_{2}$, or $K^{5}_{2}$.  

The above cases imply that  a subalgebra of $\sll$ isomorphic to $K_2$ is equivalent to (at least) one of the subalgebras in Eq. \eqref{ktt}.  
We now show that the subalgebras
in Eq. \eqref{ktt} 
are
pairwise inequivalent (except possibly $K^{2,a}_2$ for different $a$, and $K^{4,a}_2$ for different $a$).   

Consider $K^1_2$ and $K^{2,a }_2$:  If $A\in SL(2,\mathbb{C})\times SL(2,\mathbb{C})$, such that $A \langle x_1+ x_2, h_1+h_2\rangle A^{-1}$ 
$=$ $\langle x_1, h_1+a h_2\rangle$, then $A(x_1+x_2)A^{-1}\in \langle x_1, h_1+ a h_2\rangle$, which implies that $x_2$ is conjugate to
 a multiple of $a h_2$ in $SL(2,\mathbb{C})$, which is not possible for any $a $.  Hence, $K^1_2 \nsim K^{2,a }_2$.  Using similar
 reasoning we have $K^1_2 \nsim K^{3}_2$; $K^1_2 \nsim K^{4,a  }_2$; $K^1_2 \nsim K^{5}_2$; $K^3_2 \nsim K^{2,a  }_2$; $K^3_2 \nsim K^{4,a }_2$;
 $K^3_2 \nsim K^{5}_2$;  $K^{2,a }_2 \nsim K^{4,b }_2$ for any $a,b $; $K^{2,a }_2 \nsim K^{5}_2$; and $K^{4,a }_2 \nsim K^{5}_2$.

We now consider the family of subalgebras $K^{2,a}_2$:    
Suppose $K^{2,a}_2 \sim K^{2,b}_2$.  Let $A\in SL(2,\mathbb{C})\times SL(2,\mathbb{C})$, such that $A \langle x_1, h_1+ a h_2\rangle A^{-1}$ $=$ $\langle x_1, h_1+
b h_2\rangle$.  Hence, $A(h_1+ 
a h_2)A^{-1}=\alpha x_1+\beta (h_1+ b h_2)$, for some $\alpha, \beta \in \mathbb{C}$, which implies that $h_1$ is conjugate in $SL(2,\mathbb{C})$ to $\beta h_1+\alpha x_1$.  Consideration of eigenvalues implies $\beta=\pm 1$.  

Thus, we have $A(h_1+ a h_2)A^{-1}=\alpha x_1 \pm (h_1+ b h_2)$, so that $
a h_2$ is conjugate to $\pm b h_2$.  Consideration of eigenvalues 
implies $
a=\pm b$. Further, the subalgebras 
$K^{2,a}_2$ and  $K^{2,-a}_2$ are conjugate via
\begin{equation}
\begin{array}{l}
 \left(
\begin{array}{rrrr}
1 &0 & 0  &0 \\
 0 & 1 & 0&  0 \\
 0& 0 &0  &1\\
 0&0&-1&0
\end{array}
\right) \in SL(2,\mathbb{C}) \times SL(2,\mathbb{C}).
\end{array} 
\end{equation}
Hence, we have established that  $K^{2,a}_2 \sim K^{2,b}_2$ if and only if $a = \pm b$.  Similarly, $K^{4,a}_2 \sim K^{4,b}_2$ if and only if $a =\pm b $.
\end{proof}

\begin{theorem}
There are no subalgebras isomorphic to $L_1$ in  $\sll$.  
\end{theorem}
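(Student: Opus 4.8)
The plan is to argue by a dimension count using the decomposition $\sll \cong \s \oplus \s$ together with the fact -- already exploited in the proofs above -- that $\s$ contains no abelian subalgebra of dimension greater than $1$. Since $L_1$ is by definition the $3$-dimensional abelian Lie algebra, it suffices to show that $\sll$ has no $3$-dimensional abelian subalgebra; in fact I will show that every abelian subalgebra of $\sll$ has dimension at most $2$.

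First I would suppose $S$ is an abelian subalgebra of $\sll \cong \s \oplus \s$ and introduce the two projections $\pi_1, \pi_2 \colon \s \oplus \s \to \s$ onto the first and second summands. Each $\pi_i$ is a Lie algebra homomorphism (projection onto a direct summand), so $\pi_i(S)$ is a subalgebra of $\s$. Moreover, for $u, v \in S$ we have $[\pi_i(u), \pi_i(v)] = \pi_i([u,v]) = 0$, so $\pi_i(S)$ is abelian.

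The key step is then to invoke the bound $\dim \pi_i(S) \leq 1$, which holds because $\s$ has no abelian subalgebra of dimension exceeding $1$. Since every element $s \in S$ satisfies $\pi_1(s) \in \pi_1(S)$ and $\pi_2(s) \in \pi_2(S)$, we obtain the containment $S \subseteq \pi_1(S) \oplus \pi_2(S)$ of subspaces of $\s \oplus \s$, whence
\[
\dim S \leq \dim \pi_1(S) + \dim \pi_2(S) \leq 2.
\]
In particular $S$ cannot have dimension $3$, so no subalgebra isomorphic to $L_1$ exists.

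I do not expect a genuine obstacle here: the only nonformal input is the dimension bound for abelian subalgebras of $\s$, and that is a standard fact (already used in the classification of subalgebras isomorphic to $K_1$). The one point requiring a little care is the inclusion $S \subseteq \pi_1(S) \oplus \pi_2(S)$ and the resulting additivity of dimensions -- one must note that $\pi_1(S)$ and $\pi_2(S)$ lie in complementary summands of $\s \oplus \s$, so their dimensions add, yielding the clean bound $\dim S \leq 2$.
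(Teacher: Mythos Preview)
Your proof is correct and follows essentially the same approach as the paper: both arguments use the decomposition $\sll \cong \s \oplus \s$, project onto the two $\s$ factors, and invoke the fact that $\s$ has no abelian subalgebra of dimension greater than $1$. The paper's proof is simply a terser version of yours; your explicit use of the projections and the inclusion $S \subseteq \pi_1(S) \oplus \pi_2(S)$ makes the dimension count transparent and in fact yields the slightly sharper statement that every abelian subalgebra of $\sll$ has dimension at most $2$.
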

\begin{proof}
$L_1$ is a $3$-dimensional, abelian Lie algebra. The existence of a subalgebra isomorphic to $L_1$ in  $\sll$ would imply a $2$-dimensional
 abelian subalgebra in $\mathfrak{sl}(2,\mathbb{C})$, which is a contradiction.  
\end{proof}

\begin{theorem}
There is a unique $L_2$ subalgebra in $\sll$, up to inner automorphism, given by $L_2^1 = \langle x_1, x_2, h_1+h_2 \rangle$.
\end{theorem}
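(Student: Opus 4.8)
The plan is first to confirm that $L_2^1$ really is a copy of $L_2$, and then to show that every subalgebra of $\sll$ isomorphic to $L_2$ is conjugate to it. For the former, the Chevalley relations give $[h_1+h_2,x_1]=2x_1$, $[h_1+h_2,x_2]=2x_2$, and $[x_1,x_2]=0$; setting $z_1=x_1$, $z_2=x_2$, $z_3=\tfrac12(h_1+h_2)$ reproduces exactly the defining relations of $L_2$, so indeed $L_2^1\cong L_2$.

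For uniqueness, let $S=\langle b_1,b_2,b_3\rangle\subseteq\sll$ be isomorphic to $L_2$, with $[b_3,b_1]=b_1$, $[b_3,b_2]=b_2$, and $[b_1,b_2]=0$. The key structural fact is that the derived algebra $N:=[S,S]=\langle b_1,b_2\rangle$ is a $2$-dimensional abelian subalgebra on which $\ad(b_3)$ acts as the identity. I would first locate $N$ within the two simple factors of $\sll\cong\ssl\oplus\ssl$. Writing $\pi_1,\pi_2$ for the projections onto the two factors, each $\pi_i(N)$ is an abelian subalgebra of $\ssl$, hence at most one-dimensional; since $N\subseteq\pi_1(N)\oplus\pi_2(N)$ and $\dim N=2$, both projections are exactly one-dimensional and $N=\langle a_1\rangle\oplus\langle a_2\rangle$, where $a_1,a_2$ are nonzero and lie respectively in the two copies of $\ssl$.

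The heart of the proof is the factor-wise analysis of $b_3=w_1+w_2$, with $w_i$ in the $i$-th copy of $\ssl$. Because $a_1$ and $w_2$ (resp.\ $a_2$ and $w_1$) lie in different factors and so commute, the identity action of $\ad(b_3)$ on $N$ reduces to $[w_1,a_1]=a_1$ and $[w_2,a_2]=a_2$ inside each factor. In $\ssl$ every element is either semisimple or nilpotent; since $\ad(w_i)$ has the nonzero eigenvalue $1$, each $w_i$ is semisimple and nonzero, hence conjugate to $c_ih_i$ for some $c_i\in\mathbb{C}^*$. The eigenvalues of $\ad(c_ih_i)$ are $0$ and $\pm 2c_i$, so the presence of the eigenvalue $1$ forces $c_i=\pm\tfrac12$, with eigenvector a multiple of $x_i$ or $y_i$ accordingly; the Weyl reflection in $SL(2,\mathbb{C})$ interchanges these two options, so after conjugating within each factor we may take $w_i=\tfrac12h_i$ and $a_i\in\langle x_i\rangle$. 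Applying the corresponding element of $SL(2,\mathbb{C})\times SL(2,\mathbb{C})$ to $S$ then gives $b_3=\tfrac12(h_1+h_2)$ and $N=\langle x_1,x_2\rangle$, whence $S=\langle x_1,x_2,h_1+h_2\rangle=L_2^1$.

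I expect the main obstacle to be the careful bookkeeping of the factor-wise step: justifying that each $w_i$ must be semisimple, resolving the sign ambiguity $c_i=\pm\tfrac12$ through the Weyl group, and observing that a generator $b_3$ is determined only modulo $N$, so that any nilpotent contribution to $w_i$ may be absorbed into $N$ without changing $S$. The splitting of $N$ and the final identification are then routine. As a shortcut one may instead invoke the earlier classification of $K_1$-subalgebras: $N$ is conjugate to one of $K_1^1,\dots,K_1^4$, and the requirement that some element act on every nonzero vector of $N$ with eigenvalue $1$ excludes the three families containing a semisimple generator $h_i$, leaving only $K_1^1=\langle x_1,x_2\rangle$.
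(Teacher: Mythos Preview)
Your argument is correct and takes a different route from the paper's. The paper begins by invoking the conjugacy of Borel subalgebras to place $\varphi(L_2)$ inside $\langle h_1,h_2,x_1,x_2\rangle$ from the outset; the relations then force $\varphi(z_1),\varphi(z_2)\in\langle x_1,x_2\rangle$ and the $h_i$-coefficients of $\varphi(z_3)$ both to equal $\tfrac12$, after which a single upper-triangular conjugation removes the residual $x_i$-terms in $\varphi(z_3)$. You instead work structurally: you locate $N=[S,S]$ via the two factor projections, use the eigenvalue-$1$ condition to show each component $w_i$ of $b_3$ is semisimple and hence conjugate to $\pm\tfrac12 h_i$, and normalize the sign with the Weyl reflection. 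The Borel reduction is quicker because it absorbs all the conjugacy into one stroke at the beginning, while your approach makes the underlying mechanism---that $\ad(b_3)|_N=\mathrm{id}$ forces a specific Cartan element in each simple factor---more transparent and avoids any explicit matrix computations. Your aside about absorbing a ``nilpotent contribution'' of $w_i$ into $N$ is superfluous: once you have shown each $w_i$ is semisimple there is nothing to absorb. The $K_1$ shortcut you sketch at the end is also valid, and is essentially a repackaging of the factor-projection step you already carried out for $N$.
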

\begin{proof}
Consider an embedding $\varphi: L_2 \hookrightarrow \sll$.    Since $L_2$ is solvable, it is mapped into a Borel subalgebra of $\sll$.   Since there
is a unique Borel subalgebra in a semisimple Lie algebra, up to inner automorphism, we may assume
\begin{equation}
\varphi( L_2 )=\varphi(\langle z_1, z_2, z_3 \rangle)\subseteq \langle h_1, h_2, x_1, x_2\rangle.
\end{equation}
The commutation relations of $L_2$ imply
\begin{equation}
\varphi(\langle z_1, z_2 \rangle)\subseteq \langle  x_1, x_2\rangle.
\end{equation}
Hence
\begin{equation}
\begin{array}{llllllll}
\varphi(z_1)= 
s
x_1+ 
t
x_2,\\
\varphi(z_2)=cx_1+dx_2,\\
\varphi(z_3)=eh_1+fh_2+gx_1+hx_2,
\end{array}
\end{equation}
for $
s,t, c, d, e, f, g, h \in \mathbb{C}$.
The commutation relations imply $e=f=\frac{1}{2}$.  After conjugation  by
\begin{equation}
\begin{array}{l}
 \left(
\begin{array}{rrrr}
1 &g & 0  &0 \\
 0 & 1 & 0&  0 \\
 0& 0 &1  &h\\
 0&0&0&1
\end{array}
\right) \in SL(2,\mathbb{C}) \times SL(2,\mathbb{C}).
\end{array} 
\end{equation}
we may assume
\begin{equation}
\begin{array}{llllllll}
\varphi(z_1)= 
s
x_1+ 
t
x_2,\\
\varphi(z_2)=cx_1+dx_2,\\
\varphi(z_3)=\frac{1}{2}h_1+\frac{1}{2}h_2.
\end{array}
\end{equation}
Thus,  $\varphi(L_2)$ must be equivalent to $\langle x_1, x_2, h_1+h_2 \rangle=L^1_2$.
\end{proof}

\begin{theorem}
For each $a\in \mathbb{C}-\{-\frac{1}{4}, 0\}$, there are two subalgebras isomorphic to $L_{3,a}$ in $\sll$, up to inner automorphism:
\begin{equation}
\begin{array}{llllll}
L_{3, a}^{1} &=& \langle x_1, x_2, (1 + \sqrt{1+4a} ) h_1+ (1 - \sqrt{1+4a} ) h_2 \rangle, \\
L_{3, a}^{2} &=& \langle x_1, x_2, (1 - \sqrt{1+4a} ) h_1+(1 + \sqrt{1+4a} ) h_2 \rangle.
\end{array}
\end{equation} 
To be definite,  we assume the square roots in the above formulas are always in the upper half-plane 
or on the positive real axis.
There is no  subalgebra isomorphic to $L_{3,-\frac{1}{4}}$ in $\sll$.  There are four  subalgebras in $\sll$ isomorphic to $L_{3,0}$, up to inner automorphism:
 \begin{equation}
\begin{array}{llllllllllll}
L_{3, 0}^{1} &=& \langle   h_1, x_2, h_2\rangle,&
L_{3, 0}^{2} &=& \langle x_1, x_2, h_2 \rangle,\\
L_{3, 0}^{3} &=& \langle  x_1, h_1, h_2\rangle,&
L_{3, 0}^{4} &=& \langle  x_1, h_1, x_2\rangle.
\end{array}
\end{equation}
\end{theorem}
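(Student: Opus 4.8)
The plan is to follow the pattern of the preceding proofs but to organize everything around the internal structure of $L_{3,a}$. Its derived subalgebra sits inside the abelian ideal $\mathfrak{a}=\langle z_1,z_2\rangle$, and $z_3$ acts on $\mathfrak{a}$ by the matrix $M_a=\left(\begin{smallmatrix} 0 & a\\ 1 & 1\end{smallmatrix}\right)$ in the basis $(z_1,z_2)$, whose characteristic polynomial is $\lambda^{2}-\lambda-a$ with roots $\tfrac12(1\pm\sqrt{1+4a})$. Since $L_{3,a}$ is solvable, after an inner automorphism I may assume $\varphi(L_{3,a})\subseteq \mathfrak{b}=\langle h_1,h_2,x_1,x_2\rangle$, the essentially unique Borel subalgebra. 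The one computation used repeatedly is that for any $w=eh_1+fh_2+(\text{term in }\langle x_1,x_2\rangle)\in\mathfrak{b}$, the operator $\ad(w)$ preserves $\langle x_1,x_2\rangle$ and acts there as $\mathrm{diag}(2e,2f)$; in particular every such restriction is diagonalizable.

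First I would dispose of $a\neq 0$. Here $[L_{3,a},L_{3,a}]=\mathfrak{a}$, so $\varphi(\mathfrak{a})=[\varphi(L_{3,a}),\varphi(L_{3,a})]\subseteq[\mathfrak{b},\mathfrak{b}]=\langle x_1,x_2\rangle$, and by dimension $\varphi(\mathfrak{a})=\langle x_1,x_2\rangle$. Writing $\varphi(z_3)=eh_1+fh_2+\cdots$, the defining relations say that $\ad(\varphi(z_3))$ on $\langle x_1,x_2\rangle$ is similar to $M_a$; comparing with $\mathrm{diag}(2e,2f)$, the numbers $2e,2f$ are exactly the eigenvalues of $M_a$. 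Injectivity forces $\varphi(z_1),\varphi(z_2)$ to be independent, which given the relations requires $e\neq f$, so the eigenvalues are distinct. When $a=-\tfrac14$ they coincide and $M_{-1/4}$ is a nontrivial Jordan block, which can never be similar to a diagonalizable operator; this is precisely why no $L_{3,-1/4}$ occurs. When $a\notin\{0,-\tfrac14\}$ the two distinct eigenvalues admit two orderings of $(2e,2f)$, and since $\varphi(z_3)$ may be adjusted modulo $\langle x_1,x_2\rangle$, each ordering yields $\langle x_1,x_2,eh_1+fh_2\rangle$, producing exactly $L^1_{3,a}$ and $L^2_{3,a}$.

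The case $a=0$ is the main obstacle, because now $[L_{3,0},L_{3,0}]=\langle z_2\rangle$ is only one-dimensional, so $\varphi(z_1)$ is no longer confined to $\langle x_1,x_2\rangle$ and may acquire toral components. I would set $\varphi(z_2)=cx_1+dx_2$, $\varphi(z_1)=ph_1+qh_2+sx_1+tx_2$, $\varphi(z_3)=eh_1+fh_2+gx_1+hx_2$ and feed the three relations into the bracket table. The relation $[z_3,z_2]=z_2$ forces $e=\tfrac12$ whenever $c\neq 0$ and $f=\tfrac12$ whenever $d\neq 0$; combined with $[z_3,z_1]=z_2$ one checks that $c,d$ both nonzero is impossible (it would force $\varphi(z_2)=\varphi(z_1)$). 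This leaves $\varphi(z_2)$ proportional to $x_1$ or to $x_2$, with a further split according to whether $\varphi(z_1)$ carries a toral component in the opposite factor. Each of the four resulting branches collapses, after a single conjugation clearing the surviving off-diagonal term, to one of $L^1_{3,0},\dots,L^4_{3,0}$. The genuine work here is the bookkeeping of these branches rather than any deep idea.

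Finally I would prove the listed subalgebras pairwise inequivalent. The crucial tool throughout is that the automorphism interchanging the two $\s$-summands is \emph{outer}: consequently an inner automorphism preserving the nilpotent subalgebra $\langle x_1,x_2\rangle$ must stabilize each root line $\langle x_1\rangle$ and $\langle x_2\rangle$ and act trivially on $\h=\langle h_1,h_2\rangle$ modulo $\langle x_1,x_2\rangle$. For $L^1_{3,a}\nsim L^2_{3,a}$ this shows the toral directions $ph_1+qh_2$ and $qh_1+ph_2$, with $p=1+\sqrt{1+4a}$ and $q=1-\sqrt{1+4a}$, cannot be matched, since $p\neq\pm q$. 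For the four $L^i_{3,0}$ I would first separate them by the isomorphism type of their nilradical, which realizes three of the $K_1$-classes; the remaining pair, both with nilradical $\langle x_1,x_2\rangle$, is then distinguished by which root line carries eigenvalue $0$ under the toral generator, a datum no inner automorphism can switch. The square-root branch convention merely pins down one representative of each ordered pair and is otherwise immaterial.
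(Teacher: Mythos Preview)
Your argument is correct and follows the same overall strategy as the paper: reduce to the Borel subalgebra, analyze the image by cases, and then verify pairwise inequivalence. The execution, however, is more conceptual in two places. For $a\neq 0$ the paper writes out coordinates for $\varphi(z_i)$, conjugates by explicit $SL(2,\mathbb{C})\times SL(2,\mathbb{C})$ matrices in four subcases, and then solves $a=2d(2d-1)=2e(2e-1)$ to find the toral coefficients; your observation that $\ad(\varphi(z_3))|_{\langle x_1,x_2\rangle}=\mathrm{diag}(2e,2f)$ must be similar to $M_a$ collapses all of this to a single eigenvalue comparison and gives the nonexistence at $a=-\tfrac14$ for free via non-diagonalizability of the Jordan block. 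For inequivalence, the paper argues by direct matrix computation in each instance, whereas you invoke invariants: the $K_1$-conjugacy class of the nilradical separates $L^1_{3,0},L^3_{3,0}$ from one another and from $\{L^2_{3,0},L^4_{3,0}\}$, and the centre (equivalently, the zero-eigenspace of the toral generator on the nilradical) distinguishes the last pair since no inner automorphism can exchange $\langle x_1\rangle$ and $\langle x_2\rangle$. Both routes reach the same destination; yours avoids the explicit conjugating matrices at the cost of relying on the earlier $K_1$-classification, while the paper's is self-contained but more computational.
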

\begin{proof}
First assume $a\neq 0$ in $L_{3,a}$.  Consider an embedding $\varphi: L_{3,a} \hookrightarrow \sll$.    Since $L_{3,a}$ is solvable, it is mapped into a Borel subalgebra of $\sll$.   Since there is a unique Borel subalgebra in a semisimple Lie algebra, up to inner automorphism, we may assume
\begin{equation}
\varphi(\langle z_1, z_2, z_3 \rangle)\subseteq \langle h_1, h_2, x_1, x_2\rangle.
\end{equation}
The commutation relations of $L_{3,a}$ imply 
\begin{equation}
\begin{array}{llllll}
\varphi(z_1) &=& bx_1+cx_2,\\
\varphi(z_2) &=& (2bd)x_1+(2ce)x_2,\\
\varphi(z_3) &=& dh_1+eh_2+fx_1+gx_2,\\
\end{array}
\end{equation}
where we must have  $b, c \neq 0$ since $\varphi(z_1)$, and $\varphi(z_2)$  are linearly independent.  After conjugation by the 
 $SL(2,\mathbb{C}) \times SL(2,\mathbb{C})$ elements
\begin{equation}
\begin{array}{l}
 \left(
\begin{array}{rrrr}
\frac{2\alpha d}{f} &\alpha & 0  &0 \\
 0 & \frac{2b \alpha d}{f} & 0&  0 \\
 0& 0 &\frac{2\beta e}{g}  &\beta\\
 0&0&0&\frac{2\beta ec}{g}
\end{array}
\right), ~\text{if}~f, g\neq 0,
\end{array} 
\end{equation}
where  $\alpha$ is such that $4\alpha^2d^2b=f^2$, and $\beta$ is such that $4\beta^2e^2c=g^2$,
\begin{equation}
\begin{array}{l}
 \left(
\begin{array}{rrrr}
\alpha &0 & 0  &0 \\
 0 & \alpha b & 0&  0 \\
 0& 0 &\beta  &0\\
 0&0&0&\beta c
\end{array}
\right), ~\text{if}~ f, g=0,
\end{array} 
\end{equation}
where   $\alpha$ is such that $\alpha^2b=1$, and  $\beta$ is such that $\beta^2c=1$,
\begin{equation}
\begin{array}{l}
 \left(
\begin{array}{rrrr}
\alpha &0 & 0  &0 \\
 0 & \alpha b & 0&  0 \\
 0& 0 &\frac{2\beta e}{g}  &\beta\\
 0&0&0&\frac{2\beta ec}{g}
\end{array}
\right), ~\text{if}~ f=0, g\neq 0,
\end{array} 
\end{equation}
where  $\alpha$ is such that $\alpha^2b=1$, and   $\beta$ is such that $4\beta^2e^2c=g^2$,
\begin{equation}
\begin{array}{l}
 \left(
\begin{array}{rrrr}
\frac{2\alpha d}{f} & \alpha & 0  &0 \\
 0 & \frac{2b\alpha d}{f} & 0&  0 \\
 0& 0 &\beta  &0\\
 0&0&0&\beta c
\end{array}
\right), ~\text{if}~f\neq 0, g= 0,
\end{array} 
\end{equation}
where  $\alpha$ is such that $4\alpha^2d^2b=f^2$, and  $\beta$ is such that $\beta^2c=1$, we may assume
\begin{equation}
\begin{array}{llllll}
\varphi(z_1) &=& x_1+x_2,\\
\varphi(z_2) &=& (2d)x_1+(2e)x_2,\\
\varphi(z_3) &=& d h_1+eh_2.
\end{array}
\end{equation}
We must have $d\neq e$ for otherwise $\varphi(z_1)$ and $\varphi(z_2)$ would be linearly dependent.  The 
commutation relation $[z_3, z_2]=az_1+z_2$ implies
\begin{equation}
\begin{array}{llllll}
a=2d(2d-1)=2e(2e-1),
\end{array}
\end{equation}
so that 
\begin{equation}
\begin{array}{llllll}
d, e =  \frac{1}{4}(1 \pm \sqrt{1+4a} ). 
\end{array}
\end{equation}
Since $d\neq e$, then $a\neq -\frac{1}{4}$, hence $\sll$ does not have a subalgebra isomorphic 
to $L_{3,-\frac{1}{4}}$.  Further, for $a\neq -\frac{1}{4}, 0$, we thus must have that  $\varphi(L_{3,a})$ is equivalent to one of  the subalgebras
\begin{equation}
\begin{array}{llllll}
L_{3, a}^{1} &=& \langle x_1, x_2, (1 + \sqrt{1+4a} ) h_1+ (1 - \sqrt{1+4a} ) h_2 \rangle, \\
L_{3, a}^{2} &=& \langle x_1, x_2, (1 - \sqrt{1+4a} ) h_1+(1 + \sqrt{1+4a} ) h_2 \rangle.
\end{array}
\end{equation}

Let $A \in SL(2,\mathbb{C})\times SL(2,\mathbb{C})$, such that $A L^1_{3,a}A^{-1}=L^2_{3,a}$.   
This implies that   $Ax_1A^{-1}=\alpha x_1$ and $Ax_2A^{-1}=\beta x_2$, for some  $\alpha$, $\beta \in \mathbb{C}^*$ and 
\begin{equation}
\begin{array}{l}
A= \left(
\begin{array}{rrrr}
\alpha \gamma & \delta & 0  &0 \\
 0 & \gamma & 0&  0 \\
 0& 0 &\beta \epsilon  &\zeta\\
 0&0&0&\epsilon
\end{array}
\right),
\end{array} 
\end{equation}
for  $\gamma, \delta, \epsilon, \zeta \in \mathbb{C}$ such that $\alpha \gamma^2=1$, and $\beta \epsilon^2=1$. 
Hence
\begin{equation}\label{runnn}
\begin{array}{l}
A((1 + \sqrt{1+4a} ) h_1+(1 - \sqrt{1+4a} ) h_2)A^{-1}\\
 = (1 + \sqrt{1+4a} ) h_1 +(1 -\sqrt{1+4a} ) h_2  -\frac{2d\delta}{\gamma} x_1 -\frac{2e\zeta}{\epsilon}  x_2. 
\end{array} 
\end{equation}
We thus must have that $(1 + \sqrt{1+4a} ) h_1 +(1 - \sqrt{1+4a} ) h_2$ is a scalar multiple of 
$(1 - \sqrt{1+4a} )h_1 +(1 + \sqrt{1+4a} ) h_2$, since the element of Eq. \eqref{runnn} must  be in $L^2_{3,a}$.  Hence
\begin{equation}\label{runn}
\begin{array}{l}
(1 + \sqrt{1+4a} ) = \lambda (1 - \sqrt{1+4a} ), ~\text{and}\\
(1 - \sqrt{1+4a} ) = \lambda (1 + \sqrt{1+4a} ),
\end{array} 
\end{equation}
for some $\lambda \in \mathbb{C}$.  Hence $\lambda=\pm1$.  If $\lambda=1$, then $a=-\frac{1}{4}$, a contradiction.  A contradiction
also occurs for $\lambda=-1$.  Hence $L^1_{3,a} \nsim L^2_{3,a}$.

We now assume $a=0$.  Consider an embedding $\varphi: L_{3,0} \hookrightarrow \sll$;
then after applying an appropriate inner automorphism of $\sll$, we my assume:
\begin{equation}
\varphi(\langle z_1, z_2, z_3 \rangle)\subseteq \langle h_1, h_2, x_1, x_2\rangle.
\end{equation}
The commutation relations  of $L_{3,0}$ imply
\begin{equation}
\begin{array}{llllll}
\varphi(z_1) &=& bx_1+cx_2+dh_1+eh_2,\\
\varphi(z_2) &=& fx_1+gx_2,\\
\varphi(z_3) &=& hx_1+ix_2+jh_1+kh_2,\\
\end{array}
\end{equation}
for $b, c, d, e, f, g, h, i, j, k \in \mathbb{C}$.    We now consider cases based on the value of $f$ and $g$.  Note that, of course, 
$f$ and $g$ cannot both
be $0$.

\noindent \underline{Case 1}. $f, g \neq 0$.  Then, the commutation relation $[z_3, z_2]=z_2$ implies that $f(2j-1)=g(2k-1)=0$.  Hence,
$j=k=\frac{1}{2}$.  Since $[z_1,z_2]=0$, $d=e=0$.  The commutation relation $[z_3, z_1]=z_2$ then implies $b=f$ and $c=g$, so that 
$\varphi(z_1)=b x_1+c x_2 =\varphi(z_2)$, a contradiction.  Hence,  
$f$ and $g$ cannot both be nonzero.

\noindent \underline{Case 2}. $f=0, g \neq 0$. The commutation relations imply $jb=dh$, $k=\frac{1}{2}$, $e=0$, and $c=g$:
\begin{equation}
\begin{array}{llllll}
\varphi(z_1) &=& bx_1+gx_2+dh_1,\\
\varphi(z_2) &=& gx_2,\\
\varphi(z_3) &=& hx_1+ix_2+jh_1+\frac{1}{2}h_2.\\
\end{array}
\end{equation}
Note that 
 $b$ and $d$ cannot both
 be $0$, otherwise $\varphi(z_1)=\varphi(z_2)$.   Hence, the following subcases based on the possible 
values of $b$ and $d$ are exhaustive.  

\noindent \underline{Case 2.1}.  $b=0$, $d\neq 0$.  Since $jb=dh$, we must have $h=0$:
\begin{equation}
\begin{array}{llllll}
\varphi(z_1) &=& gx_2+dh_1,\\
\varphi(z_2) &=& gx_2,\\
\varphi(z_3) &=& ix_2+jh_1+\frac{1}{2}h_2.\\
\end{array}
\end{equation}
In this case $\varphi(L_{3,0})=\langle h_1, x_2, h_2 \rangle=L^1_{3,0}$.

\noindent \underline{Case 2.2}. $b\neq 0$, $d= 0$.  Since $jb=dh$, we must have $j=0$:
\begin{equation}
\begin{array}{llllll}
\varphi(z_1) &=& bx_1+gx_2,\\
\varphi(z_2) &=& gx_2,\\
\varphi(z_3) &=& hx_1+ix_2+\frac{1}{2}h_2.\\
\end{array}
\end{equation}
In this case $\varphi(L_{3,0})=\langle x_1, x_2, h_2 \rangle=L^2_{3,0}$.

\noindent \underline{Case 2.3.1}. $b, d\neq 0$, $j=h=0$:
\begin{equation}
\begin{array}{llllll}
\varphi(z_1) &=& bx_1+gx_2+dh_1,\\
\varphi(z_2) &=& gx_2,\\
\varphi(z_3) &=& ix_2+\frac{1}{2}h_2.\\
\end{array}
\end{equation}
Further,
\begin{equation}
\begin{array}{llllll}
A\varphi(z_1)A^{-1} &=& gx_2+dh_1,\\
A\varphi(z_2) A^{-1}&=& gx_2,\\
A\varphi(z_3)A^{-1} &=& ix_2+\frac{1}{2}h_2.\\
\end{array}
\end{equation}
where 
\begin{equation}\label{kjhr}
\begin{array}{l}
A= \left(
\begin{array}{rrrr}
\frac{2d}{b} &1 & 0  &0 \\
 0 & \frac{b}{2d} & 0&  0 \\
 0& 0 &1  &0\\
 0&0&0&1
\end{array}
\right) \in SL(2,\mathbb{C}) \times SL(2,\mathbb{C}).
\end{array} 
\end{equation}
Hence, $\varphi(L_{3,0})\sim \langle h_1, x_2, h_2 \rangle=L^1_{3,0}$.

\noindent \underline{Case 2.3.2}. $b, d\neq 0$,  $j, h\neq 0$. 
\begin{equation}
\begin{array}{llllll}
\varphi(z_1) &=& bx_1+gx_2+dh_1,\\
\varphi(z_2) &=& gx_2,\\
\varphi(z_3) &=& hx_1+ix_2+jh_1+\frac{1}{2}h_2.\\
\end{array}
\end{equation}
Then, for $A$ of Eq. \eqref{kjhr}, 
\begin{equation}
\begin{array}{llllll}
A\varphi(z_1)A^{-1} &=& gx_2+dh_1,\\
A\varphi(z_2)A^{-1} &=& gx_2,\\
A\varphi(z_3) A^{-1}&=& ix_2+jh_1+\frac{1}{2}h_2.\\
\end{array}
\end{equation}
Hence we have $\varphi(L_{3,0})\sim \langle h_1, x_2, h_2 \rangle=L^1_{3,0}$.   Note that if $b, d\neq 0$, either both $j$ and $h$ are nonzero or both are 
$0$, since $jb=dh$.

\noindent \underline{Case 3}. $f\neq 0, g = 0$. This case follows as in Case 2 to yield 
the subalgebras $L^3_{3,0}=\langle x_1, h_1, h_2\rangle$, and $L^4_{3,0}=\langle  x_1, h_1, x_2\rangle$.

The above cases imply that any subalgebra of $\sll$ isomorphic to $L_{3,0}$ is equivalent 
to $L^1_{3,0}$, $L^2_{3,0}$, $L^3_{3,0}$, or $L^4_{3,0}$.   It remains to show that these subalgebras are pairwise inequivalent.

$L^1_{3,0}$ is not equivalent to $L^2_{3,0}$, since if $L^1_{3,0} \sim L^2_{3,0}$, then, for some 
$A \in SL(2,\mathbb{C})\times SL(2,\mathbb{C})$, $AL^{2}_{3,0}A^{-1}=L^{1}_{3,0}$.  This 
implies $Ax_1A^{-1}$ is a scalar multiple of $h_1$, which of course is not possible.  The remaining inequivalency cases follow in a similar fashion.
\end{proof}

\begin{theorem}
There is a unique $L_4$ subalgebra in $\sll$, up to inner automorphism:  $L_4^1 = \langle x_1, x_2, h_1-h_2 \rangle$.
\end{theorem}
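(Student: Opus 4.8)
The plan is to mirror the template used in the preceding solvable cases. I begin with an embedding $\varphi\colon L_4\hookrightarrow\sll$; since $L_4$ is solvable, its image lies in a Borel subalgebra, and as the Borel is unique up to inner automorphism I may assume $\varphi(\langle z_1,z_2,z_3\rangle)\subseteq\langle h_1,h_2,x_1,x_2\rangle$. The structural fact that drives the argument is that the relations $[z_3,z_1]=z_2$ and $[z_3,z_2]=z_1$ give $[L_4,L_4]=\langle z_1,z_2\rangle$, a two-dimensional abelian ideal. Hence $\langle\varphi(z_1),\varphi(z_2)\rangle=\varphi([L_4,L_4])$ is contained in the derived algebra of the Borel, which is $\langle x_1,x_2\rangle$. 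Because $z_1,z_2$ are independent and $\langle x_1,x_2\rangle$ is itself two-dimensional, this forces $\langle\varphi(z_1),\varphi(z_2)\rangle=\langle x_1,x_2\rangle$; in particular $x_1,x_2\in\varphi(L_4)$.

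Next I write $\varphi(z_3)=eh_1+fh_2+gx_1+hx_2$. Since $x_1,x_2$ already belong to $\varphi(L_4)$, I may subtract the nilpotent part $gx_1+hx_2$ without altering the subalgebra, so that $\varphi(L_4)=\langle x_1,x_2,\,eh_1+fh_2\rangle$. The heart of the proof is then a single eigenvalue computation: because $[x_i,x_j]=0$, the operator $\ad(\varphi(z_3))$ restricted to $\langle x_1,x_2\rangle$ is the diagonal map $\mathrm{diag}(2e,2f)$; on the other hand, via the isomorphism $\varphi$ this operator is similar to $\ad(z_3)$ acting on $\langle z_1,z_2\rangle$, whose matrix $\left(\begin{smallmatrix}0&1\\1&0\end{smallmatrix}\right)$ has eigenvalues $\pm1$. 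Matching eigenvalues gives $\{2e,2f\}=\{1,-1\}$, i.e. $eh_1+fh_2=\pm\tfrac12(h_1-h_2)$.

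Both sign choices determine the same line $\langle h_1-h_2\rangle$, so in every case $\varphi(L_4)=\langle x_1,x_2,h_1-h_2\rangle=L_4^1$, which establishes both existence and uniqueness up to inner automorphism. I would finish by recording the explicit isomorphism certifying $L_4^1\cong L_4$: with $z_1=x_1+x_2$, $z_2=x_1-x_2$, and $z_3=\tfrac12(h_1-h_2)$ one checks $[z_1,z_2]=0$, $[z_3,z_1]=z_2$, and $[z_3,z_2]=z_1$.

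I do not anticipate a serious obstacle, and indeed this case is shorter than that of $L_{3,a}$: there is a single isomorphism class with no modulus, so no inequivalence-of-families argument is required. The one point deserving mild care is why the two sign choices for $(e,f)$ collapse to a single subalgebra — this is exactly the phenomenon that distinguishes $L_4$ from the generic $L_{3,a}$, where the two eigenvalue assignments produce genuinely inequivalent subalgebras $L_{3,a}^1\nsim L_{3,a}^2$. Here the two candidate generators $\tfrac12(h_1-h_2)$ and $-\tfrac12(h_1-h_2)$ are scalar multiples, so they span the same subspace; alternatively, one could run the explicit coefficient bookkeeping of the earlier proofs and exhibit a diagonal conjugation identifying the two forms.
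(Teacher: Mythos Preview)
Your proof is correct and self-contained, but the paper itself takes a shortcut here rather than running the Borel template: it identifies $L_4$ with the complexified Euclidean algebra $\mathfrak{e}(2)_{\mathbb{C}}\cong\mathfrak{so}(2,\mathbb{C})\inplus\mathbb{C}^2$ and simply cites an earlier classification (reference \cite{dr1}) of embeddings of $\mathfrak{e}(2)_{\mathbb{C}}$ into $\sll$, which already shows there are two embeddings up to inner automorphism with a single common image $\langle x_1,x_2,h_1-h_2\rangle$.

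Your argument, by contrast, carries through the same machinery the paper uses for $L_2$ and $L_{3,a}$, and is arguably more consistent with the paper's internal logic. The key step---matching the eigenvalues $\{2e,2f\}$ of $\ad(\varphi(z_3))$ on $\langle x_1,x_2\rangle$ against the eigenvalues $\{1,-1\}$ of $\ad(z_3)$ on $\langle z_1,z_2\rangle$---is a clean way to pin down $eh_1+fh_2$ up to sign, and your observation that the two sign choices span the same line is precisely what collapses the count to one subalgebra (whereas for generic $L_{3,a}$ the two eigenvalue assignments give $L_{3,a}^1\nsim L_{3,a}^2$). What the citation buys the paper is brevity; what your route buys is self-containment and a transparent explanation of \emph{why} the answer is unique.
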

\begin{proof}
$L_4$ is isomorphic to the  complexification of the Euclidean algebra $\e_{\mathbb{C}} \cong \mathfrak{so}(2,\mathbb{C}) \inplus \mathbb{C}^2$.
The classification of embeddings of $\e_{\mathbb{C}} $ into $\sll$ was given in \cite{dr1}.  It was shown that are two embeddings $\varphi_1$, $\varphi_2$, up to inner automorphism, such that 
$\varphi_1(\e_{\mathbb{C}} )=\varphi_2(\e_{\mathbb{C}} )=\langle x_1, x_2, h_1-h_2  \rangle$.  The result follows.
\end{proof}

\begin{theorem}
There are no subalgebras isomorphic to $L_5$  in $\sll$.  
\end{theorem}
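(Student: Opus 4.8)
The plan is to exploit the nilpotency of $L_5$. Its only nonzero bracket is $[z_3,z_1]=z_2$, so $L_5$ is the three-dimensional Heisenberg algebra: both its center and its derived algebra equal $\langle z_2\rangle$, and its lower central series terminates after two steps. Since homomorphic images of nilpotent Lie algebras are again nilpotent, I would compare $L_5$ against the images of $\sll$ under projection onto its two simple ideals, rather than working inside a Borel as in the previous proofs.

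Concretely, write $\sll\cong\s\oplus\s$ as in Section \ref{begin}, with simple ideals $\mathfrak{a}_1=\langle x_1,h_1,y_1\rangle$ and $\mathfrak{a}_2=\langle x_2,h_2,y_2\rangle$, and let $\pi_i\colon\sll\to\mathfrak{a}_i$ be the two projections, each a Lie algebra homomorphism. Suppose toward a contradiction that $\varphi\colon L_5\hookrightarrow\sll$ is an embedding. Then each $\pi_i\circ\varphi$ is a homomorphism, so $\pi_i(\varphi(L_5))$ is a nilpotent subalgebra of $\s$. The central claim is that $\s$ has no nilpotent subalgebra of dimension greater than $1$: a two-dimensional nilpotent Lie algebra is necessarily abelian, which would contradict the already-noted fact that $\s$ contains no abelian subalgebra of dimension $>1$, while the only three-dimensional subalgebra of $\s$ is $\s$ itself, which is simple, hence not nilpotent. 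Thus $\dim\pi_i(\varphi(L_5))\le 1$ for $i=1,2$.

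To finish, I would observe that every element of $\sll$ is the sum of its two projections, so $\varphi(L_5)\subseteq\pi_1(\varphi(L_5))\oplus\pi_2(\varphi(L_5))$; this forces $3=\dim\varphi(L_5)\le\dim\pi_1(\varphi(L_5))+\dim\pi_2(\varphi(L_5))\le 2$, a contradiction, and no such subalgebra exists. The main obstacle is the central claim about nilpotent subalgebras of $\s$ — everything else is a dimension count — but this is mild, following from the low-dimensional subalgebra structure of $\s$ already used in this section. As an alternative consistent with the earlier arguments, one could instead conjugate $\varphi(L_5)$ into the Borel $\langle h_1,h_2,x_1,x_2\rangle$, write $\varphi(z_1)$ and $\varphi(z_3)$ in coordinates, and compute $\varphi(z_2)=[\varphi(z_3),\varphi(z_1)]\in\langle x_1,x_2\rangle$; the centrality conditions $[\varphi(z_1),\varphi(z_2)]=[\varphi(z_3),\varphi(z_2)]=0$ then force the $x_1$- and $x_2$-components of $\varphi(z_2)$ to vanish, so $\varphi(z_2)=0$, contradicting injectivity. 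In that route the only obstacle is the routine bracket bookkeeping.
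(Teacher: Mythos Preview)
Your main argument is correct and is essentially the paper's own proof with the details spelled out: the paper simply asserts in one line that an $L_5$ inside $\sll$ would force $\s$ to contain a nilpotent subalgebra of dimension at least $2$, which is precisely the contrapositive of your inequality $\dim\pi_1(\varphi(L_5))+\dim\pi_2(\varphi(L_5))\le 2$. Your alternative Borel-coordinate computation also works, but the paper does not take that route here.
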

\begin{proof}
$L_5$ is nilpotent.  If there were a subalgebra isomorphic to $L_5$ in $\sll$, then $\s$ would have a nilpotent subalgebra of dimension at least $2$, which is a contradiction.  Hence,
there is no subalgebra isomorphic to  $L_5$ in $\sll$.
\end{proof}

\begin{theorem}
There is a unique $4$-dimensional, solvable subalgebra in $\sll$, up to inner automorphism: 
\begin{equation}
M^1_8 \cong \langle x_1, x_2, h_1, h_2 \rangle.
\end{equation}
\end{theorem}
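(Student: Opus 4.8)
The plan is to exploit the fact, already used repeatedly in the preceding proofs, that any solvable subalgebra of $\sll$ is contained in a Borel subalgebra, and then to close the argument with a dimension count. Since $\sll \cong \s \oplus \s$ has rank $2$ and exactly two positive roots, its Borel subalgebras have dimension $\text{rank} + (\text{number of positive roots}) = 2 + 2 = 4$, and $\langle x_1, x_2, h_1, h_2 \rangle$ is one such Borel subalgebra. A $4$-dimensional solvable subalgebra is therefore forced to be an entire Borel subalgebra, and these are all conjugate.

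First I would let $S$ be an arbitrary $4$-dimensional solvable subalgebra of $\sll$. Because $S$ is solvable, it is contained in some Borel subalgebra $B$ of $\sll$. Since all Borel subalgebras of a semisimple Lie algebra are conjugate under inner automorphisms, and $\langle x_1, x_2, h_1, h_2 \rangle$ is a Borel subalgebra, after applying a suitable inner automorphism I may assume $S \subseteq \langle x_1, x_2, h_1, h_2 \rangle$. As $\dim S = 4 = \dim \langle x_1, x_2, h_1, h_2 \rangle$, it follows that $S = \langle x_1, x_2, h_1, h_2 \rangle$. This shows there is exactly one $4$-dimensional solvable subalgebra up to inner automorphism; the same containment argument also shows that $\sll$ has no solvable subalgebra of dimension greater than $4$, so this is the maximal solvable dimension.

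It then remains to verify that $\langle x_1, x_2, h_1, h_2 \rangle$ is genuinely solvable and isomorphic to $M_8$. Using $[h_1, x_1] = 2 x_1$, $[h_2, x_2] = 2 x_2$, with all other brackets among $x_1, x_2, h_1, h_2$ vanishing, its derived algebra is the abelian subalgebra $\langle x_1, x_2 \rangle$, so the Borel is solvable of derived length $2$. Setting $z_1 = \tfrac{1}{2} h_1$, $z_2 = x_1$, $z_3 = \tfrac{1}{2} h_2$, $z_4 = x_2$ yields $[z_1, z_2] = z_2$ and $[z_3, z_4] = z_4$ with all remaining brackets zero, which is precisely the presentation of $M_8$. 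Hence $\langle x_1, x_2, h_1, h_2 \rangle \cong M_8$, as required.

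I do not anticipate a serious obstacle: the only input beyond routine bookkeeping is the standard fact that every solvable subalgebra of a complex semisimple Lie algebra embeds into a Borel subalgebra, which is exactly what makes the dimension count decisive. The one point worth stating carefully is the uniqueness of the Borel subalgebra up to inner automorphism, so that the single explicit Borel $\langle x_1, x_2, h_1, h_2 \rangle$ genuinely represents every $4$-dimensional solvable subalgebra.
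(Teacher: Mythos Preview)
Your argument is correct and follows essentially the same route as the paper: identify $\langle x_1,x_2,h_1,h_2\rangle$ as a Borel subalgebra, use that any solvable subalgebra sits inside a Borel (unique up to inner automorphism) so a $4$-dimensional one must equal it, and then exhibit the isomorphism with $M_8$ via $z_1\mapsto\tfrac{1}{2}h_1$, $z_2\mapsto x_1$, $z_3\mapsto\tfrac{1}{2}h_2$, $z_4\mapsto x_2$. You have simply spelled out the dimension count and solvability check that the paper leaves implicit, and incidentally absorbed the paper's subsequent theorem (no solvable subalgebras of dimension greater than $4$) into the same argument.
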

\begin{proof}
 The $4$-dimensional subalgebra $\langle x_1, x_2, h_1, h_2 \rangle$ of $\sll$ is a Borel subalgebra.  Hence it is the unique solvable subalgebra of $\sll$ of dimension $4$, up to inner automorphism.  Further, $M_8$ $\cong$ $\langle x_1, x_2, h_1, h_2 \rangle$  via the isomorphism:   $z_2 \mapsto x_1$;  $z_1 \mapsto \frac{1}{2}h_1$ ; $z_4 \mapsto x_2$; $z_3 \mapsto \frac{1}{2}h_2$.
\end{proof}

Since a Borel subalgebra of $\sll$, which is of dimension $4$ (and unique up to inner automorphism), is  a maximal solvable subalgebra, we have the following theorem, which completes the 
classification of solvable subalgebras of $\sll$.
\begin{theorem}
There are no solvable subalgebras in $\sll$ of dimension greater than $4$.
\end{theorem}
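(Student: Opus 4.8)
The plan is to invoke the standard structure theory of complex semisimple Lie algebras together with the dimension count already recorded in the remark preceding the statement. Every solvable subalgebra $S$ of a Lie algebra is contained in some maximal solvable subalgebra. For a semisimple Lie algebra over $\mathbb{C}$, the maximal solvable subalgebras are precisely the Borel subalgebras, and these form a single conjugacy class under inner automorphisms. Hence it suffices to exhibit one Borel subalgebra of $\sll$ and compute its dimension.

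First I would identify $\langle x_1, x_2, h_1, h_2 \rangle$ as a Borel subalgebra of $\sll$: it is the span of the Cartan subalgebra $\langle h_1, h_2 \rangle$ together with the positive root spaces $\langle x_1 \rangle$ and $\langle x_2 \rangle$. Since $\sll$ is of type $A_1 \times A_1$ it has rank $2$ and exactly $2$ positive roots, so this Borel subalgebra has dimension $4$. By the conjugacy of Borel subalgebras, any solvable subalgebra $S$ is carried by an inner automorphism into $\langle x_1, x_2, h_1, h_2 \rangle$, whence $\dim S \le 4$. The argument requires essentially no computation; the only point to be careful about is citing the correct classical facts, namely that the maximal solvable subalgebras of a complex semisimple Lie algebra are the Borel subalgebras and that all Borel subalgebras are conjugate, which are exactly what the paragraph preceding the statement records.

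As a sanity check, I would note that one can avoid the structure theory altogether with a direct projection argument. Writing $\pi_i$ for the projection of $\sll \cong \s \oplus \s$ onto its $i$-th factor, the images $\pi_1(S)$ and $\pi_2(S)$ are solvable subalgebras of $\s$, each of dimension at most $2$, since every solvable subalgebra of $\s$ lies in a Borel subalgebra of $\s$ of dimension $2$. As $S \subseteq \pi_1(S) \oplus \pi_2(S)$ as subspaces, this gives $\dim S \le \dim \pi_1(S) + \dim \pi_2(S) \le 4$. There is no genuine obstacle in either route; the one subtlety worth flagging is that in the projection argument the $\pi_i(S)$ must be taken to be the images of $S$ itself, so that the subspace containment $S \subseteq \pi_1(S) \oplus \pi_2(S)$, and hence the dimension bound, is valid.
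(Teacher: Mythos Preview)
Your primary argument is correct and matches the paper's own justification exactly: the paper simply notes that a Borel subalgebra of $\sll$ is a maximal solvable subalgebra of dimension $4$, and states the theorem as an immediate consequence. Your supplementary projection argument is a clean alternative not in the paper, but your main route is the same one the authors use.
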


\begin{table} [!h]\renewcommand{\arraystretch}{1.6} \caption{Classification of solvable subalgebras of $\sll$, up to inner automorphism.} \label{enuuu}\begin{center}
\begin{tabular}{|c|c|c|clclcl} 
\hline
Dimension &  Solvable subalgebras \\
\hline \hline
1 & 
$J^1$, $J^2$, $J^3$, $J^4$, $J^5$, $J^6$,  
$J^7$,  $J^{8, a}$ , $a \in \mathbb{C}^{*}$
\\
&
($J^{8,a} \sim J^{8,a'}$ iff $a'  =  \pm a$)
\\
\hline 
2& $K_1^i$, $1\leq i \leq 4$;  $K^1_2$, $K^{2,a}_2$, $K^{3}_2$, $K^{4,a}_2$, $K^{5}_2$, $a \in \mathbb{C}$\\
&
 (for $i=2, 4$:  $K^{i,a}_2 \sim K^{i,a'}_2$ iff $a'  =  \pm a$)
 \\
\hline 
3 &    $L_2^1$,  $L_{3, a}^{1}$,  $L_{3, a}^{2}$, $L_{3, 0}^{i}$, $L_4^1$,  \\
& $a\in \mathbb{C}-\{-\frac{1}{4}, 0 \}$, $1\leq i \leq 4$\\
\hline
4 &   $M_8^1$\\
\hline
\end{tabular}\end{center}
\end{table}

\section{The semisimple subalgebras of $\sll$}\label{semi}

\begin{theorem}\label{semmii}
There are three 
proper
semisimple subalgebras of $\sll$, up to inner automorphism, and each is isomorphic to $\s$.  They are:
 \begin{equation}
 \begin{array}{llllllllll}
A^1_1 = \langle h_1, x_1, y_1  \rangle, & A^3_1 = \langle h_1+h_2, x_1+x_2, y_1+ y_2 \rangle,\\
A^2_1 = \langle h_2, x_2, y_2 \rangle.     
 \end{array}
 \end{equation}
 \end{theorem}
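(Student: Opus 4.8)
The plan is to first reduce to embeddings of $\s$ and then to analyze them via their projections onto the two simple factors of $\sll \cong \s \oplus \s$. A semisimple Lie algebra is a direct sum of simple ideals, each of dimension at least $3$, so its dimension lies in $\{0,3,6,8,\dots\}$; in particular the only semisimple Lie algebra of dimension at most $5$ is $\s$ itself. Since $\dim \sll = 6$, any \emph{proper} semisimple subalgebra has dimension $3$ and is therefore isomorphic to $\s$. Thus it suffices to classify, up to inner automorphism, the embeddings $\varphi : \s \hookrightarrow \sll$.

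Next I would introduce the two projections $\pi_1, \pi_2$ of $\sll$ onto its $\s$-summands and consider the composites $\pi_i \circ \varphi : \s \to \s$. Because $\s$ is simple, the kernel of each $\pi_i \circ \varphi$ is either $0$ or all of $\s$, so each projection is either zero or (by dimension count) an isomorphism; and since $\varphi$ is injective, at least one is nonzero. Here I would record the two structural facts that drive the argument: every automorphism of $\s$ is inner (the diagram $A_1$ has no nontrivial symmetry), and the group of inner automorphisms of $\sll$ is the product of the inner automorphism groups of the two factors, since the factor-swapping automorphism is outer, as already noted in the proof of the first theorem.

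I would then carry out the case analysis. If exactly one projection is nonzero, $\varphi(\s)$ lies inside a single $\s$-summand, and after conjugating by an inner automorphism of that factor it becomes $A^1_1$ or $A^2_1$. If both projections are nonzero, both are isomorphisms; conjugating the first factor I may assume $\pi_1 \circ \varphi = \mathrm{id}$, and then $\pi_2 \circ \varphi$ is an automorphism of $\s$, hence inner, so conjugating the second factor makes it the identity as well. The two conjugations act on disjoint factors and so together constitute an inner automorphism of $\sll$; the resulting embedding is the diagonal map $X \mapsto (X,X)$, whose image is $A^3_1 = \langle h_1+h_2, x_1+x_2, y_1+y_2\rangle$. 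Finally I would establish pairwise inequivalence: since inner automorphisms of $\sll$ preserve each factor, they preserve the ranks of the two projections of a subalgebra. The subalgebra $A^1_1$ projects isomorphically onto the first factor and trivially onto the second, $A^2_1$ does the reverse, and $A^3_1$ projects isomorphically onto both; these three projection profiles are inner-automorphism invariants that distinguish the subalgebras.

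The main obstacle is the case in which both projections are nonzero. The delicate point is not the existence of conjugating automorphisms but the simultaneity: one must invoke both that all automorphisms of $\s$ are inner and that $\mathrm{Inn}(\sll)$ acts independently on the two factors, so that normalizing $\pi_1\circ\varphi$ and then $\pi_2\circ\varphi$ can be achieved by a single inner automorphism of $\sll$. This is what collapses the a priori large family of ``twisted diagonal'' embeddings to the unique subalgebra $A^3_1$.
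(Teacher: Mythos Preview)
Your proof is correct and takes a genuinely different route from the paper's. The paper leverages its earlier classification of $K_2$-subalgebras (Theorem~\ref{ktwo}): given an embedding $\varphi:\s\hookrightarrow\sll$, it observes that $\varphi(\langle h,x\rangle)\cong K_2$, hence is conjugate to one of $K_2^1$, $K_2^{2,a}$, $K_2^3$, $K_2^{4,a}$, $K_2^5$, and then in each of five cases normalizes $\varphi(h)$ and $\varphi(x)$ by explicit $SL(2,\mathbb{C})\times SL(2,\mathbb{C})$ matrices and solves for $\varphi(y)$ from $[h,y]=-2y$ and $[x,y]=h$; three cases survive and give $A_1^1$, $A_1^2$, $A_1^3$, while the remaining two are contradictory. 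Your argument instead works with the projections $\pi_i\circ\varphi$, uses simplicity of $\s$ to force each to be zero or an isomorphism, and invokes $\mathrm{Aut}(\s)=\mathrm{Inn}(\s)$ together with $\mathrm{Inn}(\sll)=\mathrm{Inn}(\s)\times\mathrm{Inn}(\s)$ to straighten both projections simultaneously. This is more conceptual, independent of the paper's solvable classification, and the ``projection profile'' invariant you introduce gives a cleaner inequivalence argument than the paper's implicit one. The paper's approach, by contrast, stays within the computational framework used throughout and shows how the semisimple classification grows out of the solvable one. One minor remark: in the case where exactly one projection is nonzero, no conjugation is needed---if $\pi_2\circ\varphi=0$ then $\varphi(\s)\subseteq\ker\pi_2=A_1^1$, and dimension forces equality on the nose.
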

\begin{proof}
The only possible proper semisimple subalgebra of $\sll$ is $\s$.
Let $\varphi:  \s \hookrightarrow \sll$ be an embedding and $\{ h, x, y\}$ a Chevalley basis of $\s$ with $[h,x]=2x$, $[h,y]=-2y$, and $[x,y]=h$.  
Then $\varphi(\langle h, x \rangle)\cong K_2$, hence, by Theorem \ref{ktwo}, it must be equivalent to $K^1_2$, $K^{2,a}_2$, $K^3_2$, $K^{4,a}_2$, or $K^5_2$, for some $a \in \mathbb{C}$.  We proceed in cases.

\noindent \underline{Case 1}.  $\varphi(\langle h, x \rangle)\sim K^1_2$.  The commutation relation $[h, x]=2x$ implies, after appropriate inner automorphism,
 \begin{equation}
 \begin{array}{llll}
 \varphi(x)&=& a(x_1+x_2), a\neq 0,\\
  \varphi(h)&=& b(x_1+x_2)+(h_1+h_2).
 \end{array}
 \end{equation}
Then, after conjugation by 
\begin{equation}
\begin{array}{l}
 \left(
\begin{array}{rrrr}
\alpha &\frac{\alpha b}{2} & 0  &0 \\
 0 & \alpha a & 0&  0 \\
 0& 0 &\alpha  &\frac{\alpha b}{2}\\
 0&0&0&\alpha a
\end{array}
\right) \in SL(2,\mathbb{C}) \times SL(2,\mathbb{C}),
\end{array} 
\end{equation}
where  $\alpha$ is such that $\alpha^2 a=1$, we may assume
 \begin{equation}
 \begin{array}{llll}
 \varphi(x)&=& x_1+x_2,\\
  \varphi(h)&=& h_1+h_2.
 \end{array}
 \end{equation}
The commutation relations $[h, y]=-2y$ and $[x,y]=h$ imply
 \begin{equation}
 \begin{array}{llll}
 \varphi(y)&=& y_1+y_2.
 \end{array}
 \end{equation}
Hence, in this case $\varphi(\s)$ is equivalent to $A^3_1=\langle h_1+h_2, x_1+x_2, y_1+y_2\rangle$.

\noindent \underline{Case 2}.  $\varphi(\langle h, x \rangle)\sim K^{2,a}_2$. 
The commutation relation $[h, x]=2x$ implies, after appropriate inner automorphism,
 \begin{equation}
 \begin{array}{llll}
 \varphi(x)&=& bx_1, b\neq 0,\\
  \varphi(h)&=& c x_1+h_1+ah_2.
 \end{array}
 \end{equation}
Then, after conjugation by 
\begin{equation}
\begin{array}{l}
 \left(
\begin{array}{rrrr}
\alpha &\frac{\alpha c}{2} & 0  &0 \\
 0 & \alpha b & 0&  0 \\
 0& 0 &1  &0\\
 0&0&0&1
\end{array}
\right) \in SL(2,\mathbb{C}) \times SL(2,\mathbb{C}),
\end{array} 
\end{equation}
where  $\alpha$ is such that $\alpha^2b=1$, we may assume
 \begin{equation}
 \begin{array}{llll}
 \varphi(x)&=& x_1,\\
  \varphi(h)&=& h_1+ah_2.
 \end{array}
 \end{equation}
The commutation relations $[h, y]=-2y$ and $[x,y]=h$ imply  that $a=0$ and 
 \begin{equation}
 \begin{array}{llll}
 \varphi(y)&=& y_1.
 \end{array}
 \end{equation}
Hence, in this case $\varphi(\s)$ is equivalent to $A^1_1=\langle h_1, x_1, y_1\rangle$.

\noindent \underline{Case 3}.  $\varphi(\langle h, x \rangle)\sim K^{4,a}_2$.  This case follows as in Case 2 to yield that 
$\varphi(\s)$ is equivalent to $A^2_1=\langle h_2, x_2, y_2\rangle$.

\noindent \underline{Case 4}.  $\varphi(\langle h, x \rangle)\sim K^{3}_2$. 
The commutation relation $[h, x]=2x$ implies, after appropriate inner automorphism,
 \begin{equation}
 \begin{array}{llll}
 \varphi(x)&=& ax_1, a\neq 0,\\
  \varphi(h)&=& b x_1+h_1+x_2.
 \end{array}
 \end{equation}
Then, after conjugation by 
\begin{equation}
\begin{array}{l}
 \left(
\begin{array}{rrrr}
\alpha &\frac{\alpha b}{2} & 0  &0 \\
 0 & \alpha a & 0&  0 \\
 0& 0 &1  &0\\
 0&0&0&1
\end{array}
\right) \in SL(2,\mathbb{C}) \times SL(2,\mathbb{C}),
\end{array} 
\end{equation}
where  $\alpha$ is such that $\alpha^2a=1$, we may assume
 \begin{equation}
 \begin{array}{llll}
 \varphi(x)&=& x_1,\\
  \varphi(h)&=& h_1+x_2.
 \end{array}
 \end{equation}
The  commutation relation $[h, y]=-2y$ implies that $\varphi(y)$ is a nonzero scalar multiple of $y_1$.  However, this implies  $\varphi([x,y]) \neq [\varphi(x), \varphi(y)]$, a contradiction to $\varphi$ being a Lie algebra embedding.
Hence, this case cannot yield a subalgebra isomorphic to $\s$.

\noindent \underline{Case 5}.  $\varphi(\langle h, x \rangle)\sim K^{5}_2$. It follows as in Case 4 that 
no subalgebra isomorphic to $\s$ comes from this case.

The above cases imply that the every subalgebra of $\sll$ isomorphic to $\s$--that is, of type $A_1$--is equivalent to 
$A^1_1$, $A^2_1$, or $A^3_1$.  Further, these subalgebras are clearly pairwise inequivalent.
\end{proof}

\section{The Levi decomposable subalgebras of $\sll$}\label{les}

Below we decompose $\sll$ with respect to the adjoint action of the subalgebras $A_1^1$, $A_1^2$, and $A_1^3$, respectively.  

\begin{equation}\label{acc3}
\begin{array}{ccccccccccccccc}
\sll &\cong_{A^1_1}& \langle x_1, h_1, y_1 \rangle&\oplus& \langle x_2 \rangle&\oplus&
\langle y_2\rangle&\oplus&  \langle h_2 \rangle \\
&  \cong_{A^1_1} & V_{}(2) &\oplus&V_{}(0)&\oplus&V_{}(0)&\oplus& V_{}(0),
\end{array}
\end{equation}
\begin{equation}\label{acc4}
\begin{array}{cccccccccccccccc}
\sll &\cong_{A^2_1}&  \langle x_2, h_2, y_2 \rangle&\oplus& \langle x_1 \rangle&\oplus&
\langle y_1\rangle&\oplus&  \langle h_1 \rangle \\
&  \cong_{A^2_1} & V_{}(2) &\oplus&V_{}(0)&\oplus&V_{}(0)&\oplus& V_{}(0),
\end{array}
\end{equation}
\begin{equation}\label{acc5}
\begin{array}{cccccccccccccccc}
\sll &\cong_{A^3_1}&  \langle x_1+x_2, h_1+h_2, y_1+y_2 \rangle&\oplus& \langle x_2, h_2, y_2 \rangle \\
&  \cong_{A^3_1} & V_{}(2) &\oplus&V_{}(2).
\end{array}
\end{equation}
The components--or combinations thereof--in the decompositions above, beyond $A_1^1$, $A_2^1$, and $A_3^1$, respectively, give us the possible 
extensions of $A_1^1$, $A_2^1$, and $A_3^1$, respectively.

\begin{lemma}\label{lem1}
Let  $\psi: \la \inplus \mathfrak{s} \rightarrow \mathfrak{h} \inplus \mathfrak{r}$ be a Lie algebra isomorphism, where  $\mathfrak{g}$  and  $\mathfrak{h}$  are semisimple and  $\mathfrak{s}$  and  $\mathfrak{r}$  are solvable.
Then $\psi(\mathfrak{s}) = \mathfrak{r}$.
\end{lemma}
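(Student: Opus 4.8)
The plan is to observe that $\mathfrak{s}$ and $\mathfrak{r}$ are precisely the radicals (maximal solvable ideals) of $\la \inplus \mathfrak{s}$ and $\mathfrak{h} \inplus \mathfrak{r}$, respectively, and then to invoke the standard fact that any Lie algebra isomorphism carries the radical of its domain onto the radical of its codomain. Writing $\mathrm{rad}(L)$ for the unique maximal solvable ideal of a finite-dimensional Lie algebra $L$, the equality $\psi(\mathfrak{s}) = \mathfrak{r}$ will follow immediately once both identifications are in place.

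First I would identify $\mathfrak{s}$ with $\mathrm{rad}(\la \inplus \mathfrak{s})$. Since $\mathfrak{s}$ is a solvable ideal of $\la \inplus \mathfrak{s}$, it is contained in the radical. For the reverse containment, note that the quotient $(\la \inplus \mathfrak{s})/\mathfrak{s} \cong \la$ is semisimple and therefore has trivial radical; since the image of $\mathrm{rad}(\la \inplus \mathfrak{s})$ in this quotient is a solvable ideal, it must vanish, forcing $\mathrm{rad}(\la \inplus \mathfrak{s}) \subseteq \mathfrak{s}$. Hence $\mathfrak{s} = \mathrm{rad}(\la \inplus \mathfrak{s})$, and the identical argument gives $\mathfrak{r} = \mathrm{rad}(\mathfrak{h} \inplus \mathfrak{r})$.

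Finally I would invoke the invariance of the radical under isomorphism. The image $\psi(\mathfrak{s})$ is an ideal of $\mathfrak{h} \inplus \mathfrak{r}$ (isomorphisms send ideals to ideals) and is solvable (solvability is an isomorphism invariant), with the same statements holding for $\psi^{-1}$ applied to any solvable ideal of the codomain; by the maximality characterization of the radical this forces $\psi(\mathrm{rad}(\la \inplus \mathfrak{s})) = \mathrm{rad}(\mathfrak{h} \inplus \mathfrak{r})$. Combining this with the two identifications of the previous paragraph yields $\psi(\mathfrak{s}) = \mathfrak{r}$, as desired.

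I do not expect a genuine obstacle here, since the content is entirely formal. The one point requiring a moment's care is the reverse containment $\mathrm{rad}(\la \inplus \mathfrak{s}) \subseteq \mathfrak{s}$, which is exactly where the hypothesis that $\la$ (equivalently the quotient) is semisimple enters; without it, $\mathfrak{s}$ would only be guaranteed to contain, rather than coincide with, the radical. The preservation of the radical under isomorphism is standard and requires no computation.
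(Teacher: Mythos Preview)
Your argument is correct and rests on the same key observation as the paper's proof: a solvable ideal of a semisimple Lie algebra must vanish. The packaging differs slightly: you identify $\mathfrak{s}$ and $\mathfrak{r}$ as the radicals and invoke invariance of the radical under isomorphism, whereas the paper argues more directly by projecting $\psi(\mathfrak{s})$ onto $\mathfrak{h}$ via $\pi:\mathfrak{h}\inplus\mathfrak{r}\to\mathfrak{h}$, noting that $\pi(\psi(\mathfrak{s}))$ is a solvable ideal of the semisimple $\mathfrak{h}$ and hence zero, so $\psi(\mathfrak{s})\subseteq\mathfrak{r}$, and then obtaining equality from $\dim\mathfrak{s}=\dim\mathfrak{r}$ (since $\mathfrak{g}\cong\mathfrak{h}$). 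Your route is marginally more conceptual and symmetric; the paper's is a shade more elementary in that it avoids naming the radical and finishes with a dimension count rather than a second application of the argument to $\psi^{-1}$.
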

\begin{proof}
Let $\pi: \mathfrak{h} \inplus \mathfrak{r} \rightarrow  \mathfrak{h}$ be the projection map of  $\mathfrak{h} \inplus \mathfrak{r}$ onto  $\mathfrak{h}$.  Then,
$\pi( \psi(\mathfrak{s}))$ is a solvable ideal of $\mathfrak{h}$.   Since  $\mathfrak{h}$ is semisimple, $\pi( \psi(\mathfrak{s}))=0$.  Hence,  $\psi(\mathfrak{s})\subseteq \mathfrak{r}$.  The Levi factor in a Levi decomposition is unique, up to isomorphism (i.e., $\la \cong\mathfrak{h}$).  Hence, dimension considerations imply $\psi(\mathfrak{s}) = \mathfrak{r}$.
\end{proof}

\begin{theorem}\label{ld1}
There are four $4$-dimensional Levi decomposable subalgebras of $\sll$, up to inner automorphism.  They are:
\begin{equation}
\begin{array}{llllllllll}
(A_1 \oplus J)^1&=& \langle x_1, y_1, h_1 \rangle \oplus \langle h_2 \rangle, \\
(A_1 \oplus J)^2&=& \langle x_1, y_1, h_1 \rangle \oplus \langle x_2 \rangle, \\
(A_1 \oplus J)^3&=& \langle x_2, y_2, h_2 \rangle \oplus \langle h_1 \rangle, \\
(A_1 \oplus J)^4&=& \langle x_2, y_2, h_2 \rangle \oplus \langle x_1 \rangle.
\end{array}
\end{equation}
The classification is recorded in Table \ref{enggg}. 
\end{theorem}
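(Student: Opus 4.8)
The plan is to exploit the fact that a $4$-dimensional Levi decomposable subalgebra is forced to be a direct sum of a copy of $\s$ with a $1$-dimensional centralizing ideal. Indeed, if $L\subseteq\sll$ is Levi decomposable, write $L=\la\oplus\mathfrak{r}$ for its Levi decomposition, with $\la$ the semisimple part and $\mathfrak{r}$ the (solvable) radical. Then $\dim\la\geq 3$ and $\dim\mathfrak{r}\geq 1$, so $\dim L=4$ forces $\la\cong\s$ and $\dim\mathfrak{r}=1$. Since $\mathfrak{r}$ is an ideal, $\la$ acts on it; but the only $1$-dimensional representation of a semisimple Lie algebra is trivial, whence $[\la,\mathfrak{r}]=0$. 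Thus $L=\la\oplus\mathfrak{r}$ is an honest direct sum, $\mathfrak{r}$ is a $1$-dimensional abelian subalgebra $J$ contained in the centralizer $C_{\sll}(\la)$, and $J\cap\la=0$ since $\la$ has trivial center.

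First I would fix the Levi factor. By Theorem \ref{semmii}, every subalgebra of $\sll$ isomorphic to $\s$ is inner-conjugate to exactly one of $A^1_1$, $A^2_1$, $A^3_1$, so I may assume $\la$ is one of these. Next I would read the centralizer $C_{\sll}(\la)$ off the decompositions \eqref{acc3}--\eqref{acc5}, the centralizer being the sum of the trivial ($V(0)$) isotypic components. From \eqref{acc3} and \eqref{acc4}, $C_{\sll}(A^1_1)=\langle x_2,h_2,y_2\rangle$ and $C_{\sll}(A^2_1)=\langle x_1,h_1,y_1\rangle$, each a copy of $\s$; from \eqref{acc5}, $\sll\cong_{A^3_1}V(2)\oplus V(2)$ has no trivial summand, so $C_{\sll}(A^3_1)=0$. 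Hence $A^3_1$ admits no $1$-dimensional centralizing ideal and contributes no $4$-dimensional Levi decomposable subalgebra.

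For $\la=A^1_1$, the vector spanning $J$ lies in the complementary copy $\langle x_2,h_2,y_2\rangle\cong\s$. Conjugation by the elements $(I,g)$ with $g\in SL(2,\mathbb{C})$ fixes $A^1_1$ pointwise while acting as $SL(2,\mathbb{C})$ on this copy, so by the classification of $1$-dimensional subalgebras in Eq. \eqref{try2} I may bring $J$ to either $\langle h_2\rangle$ or $\langle x_2\rangle$. This produces $(A_1\oplus J)^1$ and $(A_1\oplus J)^2$, and the symmetric computation for $A^2_1$ produces $(A_1\oplus J)^3$ and $(A_1\oplus J)^4$, completing the enumeration.

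Finally I would show the four are pairwise inequivalent. Any inner automorphism $\psi$ carrying one of them to another restricts to a Lie algebra isomorphism, so by Lemma \ref{lem1} it maps radical to radical, and since the Levi factor is the derived algebra $[L,L]$ it maps Levi factor to Levi factor. The pairs $\{(A_1\oplus J)^1,(A_1\oplus J)^2\}$ and $\{(A_1\oplus J)^3,(A_1\oplus J)^4\}$ have Levi factors $A^1_1$ and $A^2_1$, which are inequivalent by Theorem \ref{semmii}; this separates the two pairs. Within each pair the Levi factor coincides, so $\psi$ would have to carry one $J$ to the other; but one $J$ is spanned by a semisimple element ($h_1$ or $h_2$) and the other by a nilpotent element ($x_1$ or $x_2$), and conjugation preserves the Jordan decomposition, a contradiction. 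The main obstacle is the structural reduction of the first step---recognizing that the radical must centralize the Levi factor and then correctly computing the three centralizers, in particular that $C_{\sll}(A^3_1)=0$ so that $A^3_1$ drops out; once this is in place, both the enumeration and the inequivalences are routine.
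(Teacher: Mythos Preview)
Your proof is correct and follows essentially the same route as the paper's: reduce the Levi factor to one of $A^1_1$, $A^2_1$, $A^3_1$ via Theorem~\ref{semmii}, read off the possible $1$-dimensional extensions from the decompositions \eqref{acc3}--\eqref{acc5} (eliminating $A^3_1$), and normalize the centralizing line by conjugating in the complementary $SL(2,\mathbb{C})$-factor. You are more explicit than the paper in two places---you spell out why the sum is direct (the only $1$-dimensional $\s$-module is trivial) and you give a full argument for pairwise inequivalence via Lemma~\ref{lem1} and the Jordan decomposition---whereas the paper simply asserts these points; but the underlying strategy is the same.
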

\begin{proof}
The only possible $4$-dimensional Levi decomposable subalgebra of $\sll$ is isomorphic 
to $\s \oplus J$.  Note that the sum must be direct, which is the case whenever we extend a semisimple
Lie algebra by a one dimensional algebra to form a Levi decomposable algebra.

From Theorem \ref{semmii}, any subalgebra of $\sll$ isomorphic to $\s$ is equivalent to exactly one of
$A^1_1=\langle x_1, y_1, h_1 \rangle$,  $A^2_1=\langle x_2, y_2, h_2 \rangle$, or $A^3_1=\langle x_1+x_2, y_1+y_2, h_1+h_2 \rangle$.
We now must determine the ways that we may extend each of these simple subalgebras of $\sll$ by a 
$1$-dimensional subalgebra.

Eq. \eqref{acc5} describes the adjoint action of $A^3_1$ on $\sll$.  There are two components, one is $A^3_1$ itself, and both are 
isomorphic to $V(2)$ as  $\s$-representations.  Thus, the only nontrivial extension of 
$A^3_1$ is all of $\sll$.  Hence, the subalgebra $A^3_1$ cannot be extended by a $1$-dimensional subalgebra.

The possible extensions of the subalgebra $A^1_1$ by a $1$-dimensional subalgebra are given by Eq. \eqref{acc3}.  Namely
$A^1_1 \oplus \langle a x_2+bh_2+cy_2\rangle$, for all  $a, b, c \in \mathbb{C}$, not all $0$.  However, 
\begin{equation}
\begin{array}{lllllll}
A^1_1 \oplus \langle a x_2+bh_2+cy_2\rangle &\sim& A^1_1 \oplus \langle h_2\rangle&=&(A_1 \oplus J)^1, ~\text{or}\\
A^1_1 \oplus \langle a x_2+bh_2+cy_2\rangle &\sim& A^1_1 \oplus \langle x_2\rangle&=&(A_1 \oplus J)^2.
\end{array}
\end{equation}
The equivalencies are given by a matrix of the form
\begin{equation} \label{aamhj}
\begin{array}{l}
 \left(
\begin{array}{rrrr}
I &0  \\
 0 & M
\end{array}
\right) \in SL(2,\mathbb{C}) \times SL(2,\mathbb{C}),
\end{array} 
\end{equation}
where $M\in SL(2,\mathbb{C})$, and $I$ is the $2\times 2$ identity. The matrix $M$ is chosen so that it conjugates $a x_2+bh_2+cy_2$ into its 
Jordan normal form.

Similarly, an extension of $A^2_1$ by a one dimensional subalgebra of $\sll$ is equivalent to 
\begin{equation}
\begin{array}{lllllll}
A^2_1 \oplus \langle h_1\rangle&=&(A_1 \oplus J)^{ 3}, ~\text{or} &
A^2_1 \oplus \langle x_1\rangle&=&(A_1 \oplus J)^{ 4}.
\end{array}
\end{equation}

The subalgebras $(A_1 \oplus J)^1$, $(A_1 \oplus J)^2$, $(A_1 \oplus J)^3$, and $(A_1 \oplus J)^4$ are easily seen to be pairwise inequivalent.
\end{proof}

\begin{theorem}\label{ld2}
There are two $5$-dimensional Levi decomposable subalgebras of $\sll$, up to inner automorphism.   They are:
\begin{equation}
\begin{array}{llllllllll}
(A_1 \oplus K_2)^1&=& \langle x_1, y_1, h_1 \rangle \oplus \langle x_2,  h_2 \rangle, \\
(A_1 \oplus K_2)^2&=& \langle x_2, y_2, h_2 \rangle \oplus \langle x_1, h_1 \rangle.
\end{array}
\end{equation}
The classification is recorded in Table \ref{enggg}. 
\end{theorem}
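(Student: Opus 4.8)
The plan is to classify the $5$-dimensional Levi decomposable subalgebras by exploiting the same strategy used in Theorem \ref{ld1}: a $5$-dimensional Levi decomposable subalgebra must have a $3$-dimensional semisimple Levi factor and a $2$-dimensional solvable radical, and the only proper semisimple subalgebra of $\sll$ is $\s$ (type $A_1$). By Theorem \ref{semmii}, up to inner automorphism the Levi factor is one of $A^1_1$, $A^2_1$, or $A^3_1$. Since we are extending a simple algebra by a $2$-dimensional solvable radical $\mathfrak{r}$, and the sum $A_1 \inplus \mathfrak{r}$ is a subalgebra, the radical must be invariant under the adjoint action of the Levi factor; hence $\mathfrak{r}$ is a two-dimensional $\s$-submodule of the ambient space, built from the isotypic components in the decompositions \eqref{acc3}, \eqref{acc4}, \eqref{acc5}.

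First I would rule out $A^3_1$. By \eqref{acc5}, the adjoint action of $A^3_1$ on $\sll$ decomposes as $V(2)\oplus V(2)$, with no trivial summand $V(0)$. Any $2$-dimensional $A^3_1$-invariant subspace complementary to $A^3_1$ would itself have to be a sum of $\s$-submodules, but the only proper nonzero submodules available are copies of $V(2)$, which are $3$-dimensional; there is no $2$-dimensional invariant subspace. Hence $A^3_1$ admits no $5$-dimensional Levi decomposable extension.

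Next I would treat $A^1_1$. By \eqref{acc3}, the complement decomposes as $\langle x_2\rangle \oplus \langle y_2\rangle \oplus \langle h_2\rangle$, three copies of $V(0)$ on which $A^1_1$ acts trivially. Thus any $2$-dimensional invariant complement $\mathfrak{r}$ is simply an arbitrary $2$-dimensional subspace of $\langle x_2, h_2, y_2\rangle \cong \s$, and for $A^1_1 \inplus \mathfrak{r}$ to be a subalgebra we need $\mathfrak{r}$ to be closed under bracket, i.e.\ $\mathfrak{r}$ is a $2$-dimensional solvable subalgebra of $\s$. The only such subalgebras of $\s$ are the Borel subalgebras, all conjugate (by an element of the second $SL(2,\mathbb{C})$ factor, which commutes with $A^1_1$) to $\langle x_2, h_2\rangle$. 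Applying a conjugation of the form \eqref{aamhj} with $I$ on the first block fixes $A^1_1$ pointwise and carries $\mathfrak{r}$ to $\langle x_2, h_2\rangle$, yielding $(A_1\oplus K_2)^1$. The same argument applied to $A^2_1$ via \eqref{acc4} yields $(A_1\oplus K_2)^2$.

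Finally I would verify that the radical of such a subalgebra is forced (so no gluing of the radical across the two $\s$ factors is possible): here Lemma \ref{lem1} guarantees that under any isomorphism the solvable radical maps to the solvable radical, so the radical is intrinsic and the Levi factor is determined up to conjugacy by Theorem \ref{semmii}. The two resulting subalgebras are pairwise inequivalent because their Levi factors $A^1_1$ and $A^2_1$ are inequivalent (the $\mathfrak{sl}(2,\mathbb{C})$-swap is outer), which is immediate. I expect the main obstacle to be the bookkeeping in the $A^1_1$ and $A^2_1$ cases: one must confirm both that $\mathfrak{r}$ lies entirely in the second (resp.\ first) $\s$ factor and is a genuine subalgebra, and that the conjugations \eqref{aamhj} normalizing it to a Borel fix the Levi factor; the conceptual content is light, since the triviality of the $A^1_1$-action on the complement reduces the problem to the elementary classification of $2$-dimensional solvable subalgebras of $\s$.
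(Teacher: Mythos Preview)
Your proposal is correct and follows essentially the same route as the paper: fix the Levi factor up to conjugacy via Theorem \ref{semmii}, rule out $A^3_1$ using the decomposition \eqref{acc5}, and for $A^1_1$ (resp.\ $A^2_1$) observe via \eqref{acc3} (resp.\ \eqref{acc4}) that the radical must be a $2$-dimensional subalgebra of the other $\s$ factor, hence a Borel, normalized by a conjugation of the form \eqref{aamhj}. Your write-up is slightly more explicit than the paper's about why no $2$-dimensional $A^3_1$-invariant subspace exists and about invoking Lemma \ref{lem1}, but the argument is the same.
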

\begin{proof}
From Theorem \ref{semmii}, any subalgebra of $\sll$ isomorphic to $\s$ is equivalent to exactly one of
$A^1_1=\langle x_1, y_1, h_1 \rangle$,  $A^2_1=\langle x_2, y_2, h_2 \rangle$, or $A^3_1=\langle x_1+x_2, y_1+y_2, h_1+h_2 \rangle$.
We now must determine the ways that we may extend each of these simple subalgebras of $\sll$ by a 
$2$-dimensional subalgebra.

As in the above proof, we may not extend $A^3_1$ by a $2$-dimensional subalgebra since any nontrivial extension of $A^3_1$ is all
of $\sll$.  

The possible extensions of the subalgebra $A^1_1$ by a $2$-dimensional subalgebra are given by Eq. \eqref{acc3}.  Namely
$A^1_1 \oplus K$, where $K \subseteq \{x_2, h_2, y_2 \}$ and is $2$-dimensional.  With respect the adjoint action
of $A^1_1$, $K \cong V(0) \oplus V(0)$ as an $\s$-representation.  Note again that the sum of the extension of $A^1_1$ is direct.

However,  since $K \subseteq \langle x_2,h_2, y_2\rangle$ is solvable and of dimension $2$, up to conjugacy in $SL(2,\mathbb{C})$, we must have $K=\langle x_2, h_2\rangle$.   Hence
\begin{equation}
\begin{array}{lllllll}
A^1_1 \oplus K &\sim& A^1_1 \oplus \langle x_2, h_2\rangle&=&(A_1 \oplus K_2)^1.
\end{array}
\end{equation}
Clearly $\langle x_2, h_2\rangle \cong K_2$.  The equivalency is again given by a matrix of the form
shown in Eq. \eqref{aamhj}.  This follows since $K$ is solvable, $2$ dimensional, and the fact that a Borel subalgebra--also of dimension $2$ for $\s$-- is unique, up to inner automorphism.

In a similar fashion, we have the an extension of $A^2_1$ by a $2$-dimensional subalgebra of $\sll$ is equivalent to 
\begin{equation}
\begin{array}{lllllll}
A^2_1 \oplus \langle x_1, h_1\rangle&=&(A_1 \oplus K_2)^2.
\end{array}
\end{equation}
The subalgebras $(A_1 \oplus K_2)^1$, and $(A_1 \oplus K_2)^2$ are easily seen to be inequivalent.
\end{proof}

\begin{table} [!h]\renewcommand{\arraystretch}{1.6} \caption{Classification of Levi decomposable subalgebras of $\sll$, up to inner automorphism.} \label{enggg}\begin{center}
\begin{tabular}{|c|c|c|clclcl} 
\hline
Dimension &  Levi decomposable subalgebras \\
\hline \hline
4 & $({A_1 \oplus J})^i$, $1\leq i \leq 4$   \\
\hline
5 &$(A_1 \oplus {K_2})^1$, $(A_1 \oplus {K_2})^2$  \\
\hline
\end{tabular}\end{center}
\end{table}

\section{Conclusions}

In this article,  we  
have
classified  the solvable subalgebras, semisimple subalgebras, and Levi
decomposable subalgebras of $\sll$, up to inner automorphism.  By Levi's Theorem, this is a full classification of the subalgebras of $\sll$.  The classification
is summarized in Table \ref{enddd}.

\begin{table}[!h] \renewcommand{\arraystretch}{1.6} \caption{Classification of subalgebras of $\sll$, up to inner automorphism.} \label{enddd}\centerline{
\begin{tabular}{|c|c|c|c|c|c|c|} 
\hline
Dimension &  Semisimple  & Solvable  & Levi decomposable \\
\hline \hline
1& None &$J^1$, $J^2$, $J^3$, $J^4$, $J^5$, $J^6$,  
$J^7$,  $J^{8, a}$,   
& None\\ 
&& 
($a \in \mathbb{C}^{*}$,  
$J^{8,a} \sim J^{8,a'}$ iff $a'  =  \pm a$)
&\\
\hline 
2 & None &  $K_1^1$, $K_1^2$, $K_1^3$, $K_1^4$;   & None\\ 
&&$K^1_2$, $K^{2,a}_2$, $K^{3}_2$, $K^{4,a}_2$, $K^{5}_2$, $a \in \mathbb{C}$&\\
&&
 (for $i=2, 4$:  $K^{i,a}_2 \sim K^{i,a'}_2$ iff $a'  =  \pm a$)
 &\\
\hline 
3 & $A_1^1$, $A_1^2$,  $A_1^3$& $L^1_2$,  $L_{3, a}^{1}$,  $L_{3, a}^{2}$,  $L_{3, 0}^{i}$, $L^1_4$,  &None\\
&& $a\in \mathbb{C}-\{ -\frac{1}{4}, 0 \}$, $1\leq i \leq 4$ & \\
\hline
4 & None & $M_8^1$& $({A_1 \oplus J})^i$, $1\leq i \leq 4$ \\
\hline
5 & None &None& $(A_1 \oplus {K_2})^1$, $(A_1 \oplus {K_2})^2$\\
\hline
\end{tabular}}
\end{table}

\section*{Acknowledgements}

The work of A.D.  is partially supported by a research grant from the Professional Staff Congress/City University of New York (PSC/CUNY).  
The work of J.R. is partially supported by the Natural Sciences and Engineering Research Council (NSERC).

\end{document}